\newtheorem{theorem}{Theorem}[section]
\newtheorem{lemma}[theorem]{Lemma}
\newtheorem{cor}[theorem]{Corollary}
\theoremstyle{definition}
\newtheorem{remark}[theorem]{Remark}
\newtheorem{question}[theorem]{Question}
\newtheorem{definition}[theorem]{Definition}
\newtheorem{example}[theorem]{Example}
\begin{document}

\title{Minimality of Symplectic Fiber Sums along Spheres}

\author{Josef G. Dorfmeister}
%    Information for first author
%    Address of record for the research reported here
%\address{School  of Mathematics\\  University of Minnesota\\ Minneapolis, MN 55455}
\address{Institut f\"ur Differentialgeometrie\\  Leibniz Universit\"at Hannover\\ 30167 Hannover, Germany}
\email{j.dorfmeister@math.uni-hannover.de}

\date{\today}

% Enter full title and short title for running headers
%\title{Minimality of Symplectic Fiber Sums along Spheres}
%\shorttitle{Minimality of Symplectic Fiber Sums along Spheres}
% Author name(s)
%\author{Josef G. Dorfmeister}%\affil{1}}
% Abbreviated author name for running headers
%\abbrevauthor{J G Dorfmeister}
% Abbreviated author name for first page header
%\headabbrevauthor{J G Dorfmeister}
%\address{
%\affilnum{1}Institut f\"ur Differentialgeometrie\\  Leibniz Universit\"at Hannover\\ 30167 Hannover, Germany}
%\correspdetails{Institut f\"ur Differentialgeometrie\\  Leibniz Universit\"at Hannover\\ 30167 Hannover, Germany /j.dorfmeister@math.uni-hannover.de }
%\received{}
%\revised{<As needed>}
%\accepted{<As needed>}
%\communicated{<Editor communicating this article>}
\begin{abstract}
In this note we complete the discussion begun in \cite{S} concerning the minimality of symplectic fiber sums.  We find that for fiber sums along spheres the minimality of the sum is determined by the cases discussed in \cite{U} and one additional case: If $X\#_VY=Z\#_{V_{\mathbb CP^2}}\mathbb CP^2$ with $V_{\mathbb CP^2}$ an embedded $+4$-sphere in class $[V_{\mathbb CP^2}]=2[H]\in H_2(\mathbb CP^2,\mathbb Z)$ and $Z$ has at least 2 disjoint exceptional spheres $E_i$ each meeting the submanifold $V_Z\subset Z$ positively and transversely in a single point with $[E_i]\cdot [V_X]= 1$, then the fiber sum is not minimal.
\end{abstract}
\maketitle

\tableofcontents

\section{Introduction}

The symplectic fiber sum has been used to great effect since its discovery to construct new symplectic manifolds with certain properties.  In four dimensions this has focused on manifolds with symplectic Kodaira dimension 1 or 2.  Moreover, it was shown in \cite{liu} and \cite{L1} that symplectic manifolds with symplectic Kodaira dimension $-\infty$ are rational or ruled;  these are rather well understood.  Hence it is reasonable to ask whether this surgery can produce new manifolds of Kodaira dimension 0.  As Kodaira dimension is defined on the minimal model of a symplectic manifold, it is first necessary to answer the question under what circumstances the symplectic sum produces a minimal manifold.

This question has been researched for fiber sums along submanifolds of genus strictly greater than 0 by M. Usher, see \cite{U}.  In this note we complete the discussion for fiber sums along spheres.  

The symplectic fiber sum is a surgery on two symplectic manifolds $X$ and $Y$, each containing a copy of a symplectic hypersurface $V$.  The sum $X\#_VY$ is again a symplectic manifold.  Section \ref{pre} provides a brief overview of the symplectic sum construction and minimality of symplectic manifolds.  A minimal symplectic manifold contains  no exceptional spheres, i.e. no embedded symplectic spheres of self-intersection $-1$.  Furthermore, we review the main tool for proving the following theorem, namely the symplectic sum formula for Gromov-Witten invariants.  Applying this formula in the genus 0 case will involve a detailed look at the behavior of relative curves in relation to the hypersurface $V$.  

Symplectic sums along spheres can be classified due to the following result by McDuff:  Thm. 1.4 in \cite{M} implies that one of the summands must be contained in the following list:
\begin{itemize}
\item $(Y,V_Y)=(\mathbb CP^2,H)$, $[H]$ the generator of $H_2(\mathbb CP^2,\mathbb Z)$,
\item $(Y,V_Y)=(\mathbb CP^2,2H)$,
\item $Y$ a $S^2$-bundle over a genus $g$ surface, $V_Y$ a fiber, or
\item $Y$ a $S^2$-bundle over $S^2$, $V_Y$ a section.
\end{itemize} 
Section \ref{spheresum} provides a number of examples of manifolds which can be involved in a symplectic sum along a sphere.  Emphasis is placed on the case of a $-4$-sphere as this provides the most intriguing examples.   

A case by case study in Section \ref{min} considers the main topic of this note, the minimality of fiber sums $M=X\#_VY$ along symplectic spheres $V$ in $X$ and $Y$.  The minimality of sums containing an $S^2$-bundle as one of the summands is identical to the case of higher genus sums considered in \cite{U}.  The arguments concerning the $\mathbb CP^2$ summands make use of the sum formula for GW-invariants, and in particular the case of $(\mathbb CP^2,2H)$, the rational blow-down of a $-4$-sphere, is interesting.  In contrast to the positive genus case of \cite{U}, exceptional spheres produced in the sum along a sphere may contain contributions from curves with components lying in the the hypersurface $V$.  The arguments involving the sum formula must take into account this subtle issue.

For rational blow-downs, we show that the sum is not minimal in the presence of a certain configuration of curves.  This leads to the following criterion for minimality:

\begin{lemma}Let $M=X\#_{V_X=2H}\mathbb CP^2$ be the rational blow-down of an embedded symplectic $-4$-sphere $V_X$ in $X$.  Then $M$ is not minimal if:
\begin{itemize}
\item $X\backslash V_X$ is not minimal or 
\item $X$ contains  2 disjoint distinct exceptional spheres $E_i$ each meeting $V_X$ transversely and positively in a single point with $[E_i]\cdot [V_X]=1$.
\end{itemize}

\end{lemma}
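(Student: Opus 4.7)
I would split the argument into the two cases of the lemma. The first case ($X\setminus V_X$ not minimal) is immediate: an exceptional sphere $E\subset X\setminus V_X$ is compact, hence contained in $X\setminus N(V_X)$ for a sufficiently small tubular neighborhood $N(V_X)$; since the fiber sum operation modifies only this neighborhood, $E$ descends to an exceptional symplectic sphere in $M$, so $M$ is not minimal. The second case is the substantive one, and for it I would construct an exceptional sphere $S\subset M$ explicitly.

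The construction uses a line $L\subset\mathbb{CP}^2$ to bridge the two disks obtained by truncating $E_1$ and $E_2$. Since a line in class $[H]$ meets the conic $V_{\mathbb{CP}^2}=2H$ in two points (a tangent line would meet it at a single point with multiplicity two, a situation we avoid), I choose $L$ to pass through the two points of $V_{\mathbb{CP}^2}$ that are identified under the gluing diffeomorphism with $E_1\cap V_X$ and $E_2\cap V_X$. After removing neighborhoods, $D_i:=E_i\setminus N(V_X)$ is a disk with boundary a meridian of $V_X$, and $L^{\circ}:=L\setminus N(V_{\mathbb{CP}^2})$ is an annulus whose two boundary circles are the corresponding meridians of $V_{\mathbb{CP}^2}$. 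Since the symplectic fiber sum identifies meridians with meridians, $\partial D_1$ and $\partial D_2$ match the two boundary circles of $L^{\circ}$, yielding an embedded sphere $S:=D_1\cup L^{\circ}\cup D_2\subset M$, which admits a symplectic smoothing by the standard symplectic sum construction.

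To verify $[S]^2=-1$ I would use generic smooth pushoffs. Choose $E_i'\subset X$ a generic pushoff of $E_i$, arranging the unique point where $E_i\cdot E_i'=-1$ to lie outside $N(V_X)$, and similarly $L'\subset\mathbb{CP}^2$ a pushoff of $L$ with its intersection point ($L\cdot L'=1$) outside $N(V_{\mathbb{CP}^2})$. Compatibility on the boundary can be arranged by taking $L'$ to be the line through the two points of $V_{\mathbb{CP}^2}$ corresponding under the gluing to $E_1'\cap V_X$ and $E_2'\cap V_X$; the resulting sphere $S'\subset M$ is then transverse to $S$ with all intersection points lying in the interiors of $X\setminus N(V_X)$ and $\mathbb{CP}^2\setminus N(V_{\mathbb{CP}^2})$. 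This gives
\[ [S]^2 \;=\; S\cdot S' \;=\; (E_1\cdot E_1') + (E_2\cdot E_2') + (L\cdot L') \;=\; -1-1+1 \;=\; -1, \]
so $S$ is an exceptional sphere in $M$.

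The main technical obstacle is ensuring compatibility of all boundary identifications under the gluing map $\partial N(V_X)\cong \partial N(V_{\mathbb{CP}^2})$, both for $S$ and for its pushoff $S'$. This requires careful bookkeeping of the meridian orientations (which reverse under the fiber sum gluing) and relies on the freedom to route the line $L$ (respectively $L'$) through any prescribed pair of points on $V_{\mathbb{CP}^2}$. Once the bookkeeping is done, the remaining verifications are a routine perturbation argument.
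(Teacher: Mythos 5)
Your proposal is correct and takes essentially the same route as the paper: the paper also obtains the exceptional sphere in $M$ by gluing the two exceptional spheres $E_1,E_2\subset X$ to a line in $\mathbb{CP}^2$ passing through the two matching points of the conic $V_{\mathbb{CP}^2}$ (invoking Gompf's Cor.~1.7 in \cite{G} for the symplectic gluing) and gets $[S]^2=-2+1=-1$ from the splitting formulas for the fiber sum, the first bullet being handled exactly as you do. The only difference is presentational: you re-derive the gluing and the self-intersection by hand (meridian matching and pushoffs) where the paper simply cites Gompf and its Lemma on the splitting of $A^2$ and $K_M\cdot A$.
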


This result, together with the genus $g>0$ case proven in \cite{U}, completely answers the question of the minimality of the symplectic fiber sum in dimension 4:

\begin{theorem}\label{minimal}
Let $M$ be the symplectic fiber sum $X\#_VY$ of the symplectic manifolds $(X,\omega_X)$ and $(Y,\omega_Y)$ along an embedded symplectic surface $V$ of genus $g\ge 0$.  
\begin{enumerate}
\item The manifold $M$ is not minimal if
\begin{itemize}
\item $X\backslash V_X$ or $Y\backslash V_Y$ contains an embedded symplectic sphere of self-intersection $-1$ or
\item $X\#_VY=Z\#_{V_{\mathbb CP^2}}\mathbb CP^2$ with $V_{\mathbb CP^2}$ an embedded $+4$-sphere in class $[V_{\mathbb CP^2}]=2[H]\in H_2(\mathbb CP^2,\mathbb Z)$ and $Z$ has at least 2 disjoint exceptional spheres $E_i$ each meeting the submanifold $V_Z\subset Z$ positively and transversely in a single point with $[E_i]\cdot [V_X]= 1$.
\end{itemize}
\item If $X\#_VY=Z\#_{V_B}B$ where $B$ is a $S^2$-bundle over a genus $g$ surface and $V_B$ is a section of this bundle then $M$ is minimal if and only if $Z$ is minimal.  
\item In all other cases $M$ is minimal.
\end{enumerate}
\end{theorem}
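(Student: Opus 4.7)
The plan is to combine McDuff's classification of admissible sphere summands (stated in the excerpt) with three distinct arguments, one for each part of the theorem. First I would dispatch the non-minimality claims in part (1). The first bullet is immediate: if $E\subset X\setminus V_X$ is an embedded symplectic $-1$ sphere, then $E$ lies outside the region where the surgery is performed, so it descends unchanged to an exceptional sphere in $M$. The second bullet is nothing other than the preceding Lemma applied to the $(\mathbb{CP}^2, 2H)$ rational blow-down representation of the sum, so no extra work is needed.

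Next, for part (2), I would invoke the argument of Usher in the positive genus case and check that it carries over verbatim when $Y$ is an $S^2$-bundle with $V_Y$ a section. The key structural input is that the normal bundle of a section of an $S^2$-bundle has nontrivial degree, so a relative decomposition of a putative exceptional class in $M$ forces one side to be trivial. This gives both directions of the iff: an exceptional sphere in $Z$ disjoint from $V_Z$ persists in $M$, and conversely any exceptional sphere in $M$ pulls back through the sum formula to an exceptional sphere in $Z$.

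The substantive step, and the hardest, is part (3): verifying that $M$ is minimal whenever none of the listed obstructions occur. I would argue by contradiction. Assuming $E\subset M$ is exceptional, McDuff's classification forces one of the summands, say $Y$, into the listed four types. I would then apply the symplectic sum formula for Gromov-Witten invariants to the class $[E]$: nonvanishing of the count of $E$ yields a relative decomposition $[E] = [C_X] + [C_Y]$ with matching contacts along $V$, where $C_X$, $C_Y$ are relative $J$-holomorphic curves in $(X,V_X)$ and $(Y,V_Y)$. Running through the four possibilities in the McDuff list, the only relative classes on the $Y$ side that could contribute to a $-1$ total self-intersection are severely restricted by adjunction and positivity; the $(\mathbb{CP}^2, H)$ and $S^2$-bundle cases are ruled out by the same analysis as in (2), and in the $(\mathbb{CP}^2, 2H)$ case the only surviving possibility is that $[C_Y]$ splits as two copies of $[H]$ paired against two exceptional spheres in $Z$ meeting $V_Z$ positively in one point each, which is exactly the configuration we have excluded. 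Hence no $E$ can exist and $M$ is minimal.

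The principal obstacle, as emphasized in the introduction, is the genus 0 phenomenon that relative curves may acquire components lying inside $V$. This contribution, absent in Usher's positive genus setting, must be tracked carefully when computing which relative classes can actually produce $E$ via the sum formula; it is precisely the analysis of these in-$V$ components that forces one to introduce the second bullet of (1) rather than simply inheriting minimality from $X\setminus V_X$ and $Y\setminus V_Y$ being minimal. Once this bookkeeping is done within each of the four McDuff cases, the three parts combine to give the theorem.
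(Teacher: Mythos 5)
Your overall strategy (McDuff's list plus a case analysis via the GW sum formula) is the paper's, but the mechanism you assign to the one genuinely new phenomenon is wrong, and the numerics of the surviving configuration are off. In the paper, curves with components sinking into $V$ are not the source of the second bullet of (1): they are disposed of by the index computation (Lemma \ref{index}), which shows a preglued curve with $m+1$ levels lies in a stratum of dimension at most $I(A,g)-m$, so for generic $J$ among those preserving $V$ such degenerate curves contribute nothing to the invariants. What produces the new non-minimal case is the possibility of \emph{disconnected} relative curves on the $X$-side. Concretely, for $(Y,V_Y)=(\mathbb{CP}^2,2H)$ the vanishing results (Lemma \ref{H}, Lemma \ref{2H}, Cor.\ \ref{2happly}) kill all purely relative genus-$0$ invariants except in the class $B=[H]$; disconnected or multiply covered curves on the $\mathbb{CP}^2$ side are excluded because distinct components there must intersect away from $V$ and so cannot assemble into an embedded sphere. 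This leaves $B=[H]$ with either a single contact of order $2$ --- ruled out because the matching connected $X$-side class would have $A_X^2=-2$, $K_X\cdot A_X=-2$, contradicting adjunction for a simple map --- or two simple contacts, where the dimension count forces the two disjoint $X$-side components to be exceptional spheres each meeting $V_X$ once. The resulting exceptional sphere in $M$ is \emph{one} line $H$ (meeting the conic $2H$ in two points) glued to two disjoint $-1$-spheres; your statement that ``$[C_Y]$ splits as two copies of $[H]$'' gives $A_X^2+A_Y^2=0$, the wrong number of contact points, and a disconnected $\mathbb{CP}^2$-side curve, so that configuration cannot occur. Without the vanishing lemmas, the adjunction exclusion of the tangency case, the dimension count, and the discounting of multi-level curves, part (3) of your outline is not yet a proof.

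Your part (2) also rests on a false structural claim: a section of $S^2\times S^2$ has normal bundle of degree $0$, so ``nontrivial degree'' fails, and an exceptional sphere in $Z$ need not be disjoint from $V_Z$, so it does not simply ``persist'' into $M$. The paper's (and Usher's) argument is more direct: when $V_B$ is a section of an $S^2$-bundle $B$, the sum $Z\#_{V_B}B$ is diffeomorphic to $Z$, so minimality of $M$ is literally equivalent to minimality of $Z$; similarly the fiber-along-fiber case (both summands $S^2$-bundles) must be treated separately --- the sum is again an $S^2$-bundle over a surface of added genus, minimal under relative minimality, with the $S^2\times S^2$ summand reducing to the section case. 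Finally, note that the whole analysis presupposes the reduction to relatively minimal pairs (blowing down all exceptional spheres disjoint from $V$), which is the first bullet of (1) and should be stated as such before invoking the sum formula.
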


The non-minimal setup in the rational blow-down is the following gluing:

%\begin{center}
\begin{figure}[H]
\centering
\begin{tikzpicture}
\draw (-2,4) node{$X$};
\draw (3,4) node{$\mathbb{C}P^2$};
\draw (0,0)--(0,4) node[anchor=east]{$V_X$};
\draw (0,1)--(-2,1) node[anchor=north]{$E_2$};
\draw (0,3)--(-2,3) node[anchor=north]{$E_1$};
\draw (1,0)--(1,4) node[anchor=west]{$V_{\mathbb CP^2}$};
\draw[dotted] (0,3)--(1,3);
\draw[dotted] (0,1)--(1,1);
\draw (1,1) arc(-90:90:1cm);
\draw (2,2) node[anchor=south west]{$H$};
\end{tikzpicture}
\caption{The criterion case for non-minimality in the rational blow-down of a symplectic $-4$-sphere $V_X$.  }
\end{figure}
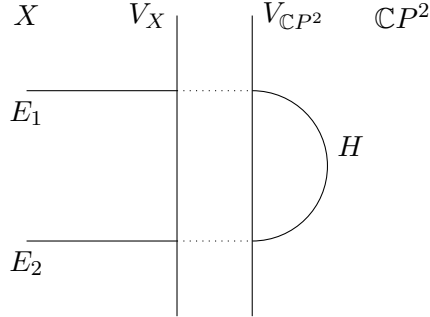
%\end{center}

%To prove minimality, we must show that no exceptional spheres exist in $M$.  This is done by calculating the Gromov-Witten invariant of an exceptional sphere using the sum formula developed in \cite{LR} (see also \cite{IP4}).  In the higher genus case it is straightforward to exclude curves with components mapping into the hypersurface $V$.  However, in the genus 0 case this is an issue which must be carefully addressed.  In particular in the case of a $-4$-sphere, which is a non-generic situation as defined by Taubes (\cite{T}), the behavior of degenerate configurations and their contribution to the GW-invariant must be studied.

{\bf Acknowledgments}  I gratefully acknowledge the patient support of my advisor Prof. Tian Jun Li.    The exposition of this paper as well as its mathematical content were considerably improved by the precise reading of Prof. Dusa McDuff.  I also thank Prof. Bob Gompf, Prof. Anar Akhmedov,  Weiyi Zhang and Weiwei Wu for their interest.  Part of this work was completed while at the University of Minnesota.  The author is supported through the Graduiertenkolleg 1463: Analysis, Geometry and String Theory.  

\section{Preliminaries\label{pre}}

In this section we provide an overview of the methods used in the proofs.  This is to make this note self-contained and to introduce notation used in the rest of the paper.

\subsection{Symplectic fiber sum}

The symplectic fiber sum is a smooth surgery operation which is performed in the symplectic category.    Let $X_1$, $X_2$ be $2n$-dimensional smooth
manifolds. Suppose we are given
codimension 2 embeddings $j_i:V\rightarrow X_i$ of  a
smooth closed oriented manifold $V$ with normal bundles $N_iV$.
Assume that the Euler classes of the normal bundle of the embedding
of $V$ in $X_i$ satisfy $e(N_1V)+e(N_2V)=0$ and fix a
fiber-orientation reversing bundle isomorphism $\Theta:
N_1V\rightarrow N_2V$.  By canonically identifying the normal
bundles with a tubular neighborhood $\nu_i$ of $j_i(V)$, we obtain
an orientation preserving diffeomorphism $\varphi: \nu_1\backslash
j_1(V)\rightarrow \nu_2\backslash j_2(V)$ by composing $\Theta$ with
the diffeomorphism that turns each punctured fiber inside out.  This
defines a gluing of $X_1$ to $X_2$ along the embeddings of $V$ denoted
$M=X_1\#_{(V,\varphi)}X_2$.  The diffeomorphism type of this manifold is
determined by the embeddings $(j_1,j_2)$ and the map $\Theta$.  

That this procedure works in the symplectic category is due to Gompf (\cite{G}) and McCarthy-Wolfson (\cite{MW}).  Assume $X_1$ and $X_2$ admit symplectic forms $\omega_1,\omega_2$ resp.  If the embeddings $j_i$ are symplectic with respect to these forms, then
we obtain $M=X_1\#_{(V,\varphi)}X_2$ together with a symplectic form
$\omega$ created from $\omega_1$ and $\omega_2$.

This procedure can be reversed, this is called the symplectic cut, see \cite{Le} for details.  Throughout this paper, we suppress the map $\varphi$ in the notation and work in the symplectic category.  Hence we refer only to the fiber sum $M=X_1\#_VX_2$.

\subsection{Notation}

Let $(X,\omega_X)$ be a smooth, closed, symplectic 4-manifold.  We shall generally not distinguish between a symplectic form $\omega$ and a symplectic class $[\omega]$, i.e. a cohomology class which can be represented by a symplectic form.  Both shall be denoted by $\omega$.  Let $V$ be a $2$-dimensional smooth closed manifold such that $X$ contains a copy $V_X$ and $V_Y$ of $V$ which is symplectic with respect to $\omega_X$.  Such a triple $(X,V_X,\omega_X)$ will be called a symplectic pair and we will suppress the $\omega_X$ in the notation henceforth.  Denote the homology class of $V_X$ by $[V_X]\in H_2(X)$ and the fiber sum of two sympelctic pairs $(X,V_X)$ and $(Y,V_Y)$ along $V\cong V_X\cong V_Y$ by  $M=X\#_VY$.

Let $A\in H_2(M,\mathbb Z)$ be represented by a connected surface $C\subset M$ of genus $g$.  The surface $C$ decomposes under a symplectic cut into surfaces $C_X\subset X$ and $C_Y\subset Y$, each surface not necessarily connected.  We shall denote the class of $C_X$ by $A_X$ and similarly for $C_Y$.  Let $r$ denote the number of intersection points of the surface $C_X$ with $V_X$, counted without multiplicities.  This is of course the same as for $C_Y$ and $V_Y$.  On the level of homology, we have  $A_X\cdot [V_X]\ge r\ge 0$.

A class $K_X\in H^2(X,\mathbb Z)$ is called a symplectic canonical class if there exists a symplectic form $\omega$ on $M$ such that for any almost complex structure $J$ tamed by $\omega$,
\[
K_X=K_\omega=-c_1(X,J).
\]
We will suppress the dependence on the symplectic form $\omega$ as our calculations will be unaffected by the precise choice of $\omega$.

\subsection{Splitting of classes under fiber sums}

As described in the introduction, we will apply the sum formula for Gromov-Witten invariants to determine the minimality of the sum $M=X\#_VY$.  For this reason we need to understand how homology and cohomology classes behave under the symplectic fiber sum.  To begin, we have the following:

\begin{lemma}\label{formulas}Given a fiber sum $M=X\#_VY$ and using the above notation, we have
\begin{equation}
K_M\cdot A=(K_X+[V_X])\cdot A_X+(K_Y+ [V_Y])\cdot A_Y,
\end{equation}
and
\begin{equation}
A^2=A_X^2+A_Y^2
\end{equation}
where $A^2=A\cdot A$.

\end{lemma}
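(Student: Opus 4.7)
The plan is to prove the self-intersection formula directly by counting intersections, and then derive the canonical class formula via the adjunction equality applied to embedded surface representatives.

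For the self-intersection formula, pick two transverse smooth embedded representatives $C$ and $C'$ of $A$, each transverse to the neck region $\partial\hat X$ (where $\hat X := X\setminus \nu V_X$). By a generic perturbation one may arrange that $C\cap C'$ avoids the neck and that the boundary circles of $C\cap\hat X$ and $C'\cap\hat X$ sit in distinct $S^1$-fibers of $\partial\hat X \to V_X$. After the symplectic cut, the capping disks for $C$ and $C'$ in the normal bundle $NV_X$ lie in distinct fibers, hence are disjoint and contribute nothing to the intersection count. Therefore $A_X\cdot A_X$ equals the signed count of intersection points of $C$ and $C'$ lying in $\hat X$; combining with the symmetric statement on the $Y$-side gives $A^2 = A_X^2 + A_Y^2$.

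For the canonical class formula, represent $A$ by an embedded surface $C$ transverse to $\partial\hat X$, meeting the neck in $r$ fiber circles. The symplectic cut produces embedded surfaces $C_X\subset X$, $C_Y\subset Y$ representing $A_X$, $A_Y$, obtained by capping each boundary circle with the corresponding fiber disk, so that $r = V_X\cdot A_X = V_Y\cdot A_Y$. A straightforward Euler-characteristic computation — cutting $C$ along the $r$ circles preserves $\chi$, while each of the $2r$ cap disks contributes $+1$ — yields $\chi(C_X) + \chi(C_Y) = \chi(C) + 2r$. Applying the adjunction equality $K_W\cdot[S] + [S]^2 = -\chi(S)$ to each of $C, C_X, C_Y$ in their respective ambient manifolds, subtracting, and invoking the Euler relation together with the already-proved self-intersection formula, one finds
\[
K_M\cdot A - K_X\cdot A_X - K_Y\cdot A_Y = \bigl(\chi(C_X)+\chi(C_Y)-\chi(C)\bigr) + \bigl(A_X^2+A_Y^2-A^2\bigr) = 2r,
\]
and since $2r = V_X\cdot A_X + V_Y\cdot A_Y$, this rearranges to the claimed identity. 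Linearity extends the formula from embedded-surface classes to arbitrary $A\in H_2(M,\mathbb Z)$.

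The main subtlety I expect is the adjunction \emph{equality} for the embedded but generally non-$J$-holomorphic surfaces $C, C_X, C_Y$. This is standard: since the space of $\omega$-tame almost complex structures is contractible, one can homotope through tame structures to render any embedded oriented surface pseudoholomorphic without changing $c_1$ or the self-intersection. Additional bookkeeping is required if $C$ is disconnected, or if $C_X$ and $C_Y$ contain components disjoint from $V$, but every step is additive per component and causes no real difficulty.
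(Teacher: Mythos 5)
Your argument for $A^2=A_X^2+A_Y^2$ is sound and is essentially the route the paper sketches (the paper glues representatives of $A_X,A_Y$ and localizes intersections away from the neck; you cut instead of glue, which amounts to the same count). One small caveat: to know that the cut of your second representative $C'$ still represents $A_X$ you should take $C'$ to be a generic push-off of $C$; for an arbitrary second representative of $A$ the cut class is only well defined up to elements of $\ker\bigl(H_2(X\setminus V_X)\to H_2(X)\bigr)$ (rim tori), which is harmless when $V$ is a sphere but not in the generality in which the lemma is stated.

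The genuine gap is in your justification of the adjunction step for the first formula. It is not true that an arbitrary embedded oriented surface can be made pseudoholomorphic by homotoping through tame almost complex structures: the adjunction \emph{equality} $K_W\cdot[S]+[S]^2=2g(S)-2$ is itself an obstruction (a trivially embedded null-homologous sphere violates it and can never be $J$-holomorphic), and contractibility of the space of tame $J$'s says nothing about realizing a given surface as a $J$-curve. The equality holds for embedded \emph{symplectic} ($J$-holomorphic) surfaces, not for general smooth representatives. The same restriction is hidden in your identification $r=[V_X]\cdot A_X=[V_Y]\cdot A_Y$: this needs every neck crossing to be a positive transverse intersection with $V$, which is automatic for symplectic representatives but false in general -- the paper itself only records $A_X\cdot[V_X]\ge r$. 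So as written your Euler-characteristic-plus-adjunction computation proves the canonical class formula only for classes whose representative and cut pieces are symplectic and positively transverse to $V$, and the closing appeal to ``linearity'' does not repair this, since the decomposition $A\mapsto(A_X,A_Y)$ is tied to a choice of representative and not every class admits such a symplectic one. The paper instead quotes Lemma 2.3 of \cite{IP5}, which is a Chern-class gluing statement (on the two sides $K_M$ is glued from $K_X\otimes\mathcal O(V_X)$ and $K_Y\otimes\mathcal O(V_Y)$) and is purely bundle-theoretic, requiring no adjunction and no positivity. Either invoke that, or state explicitly that your derivation applies to the symplectic/pseudoholomorphic representatives that actually occur in the paper's applications; with that restriction your computation is correct and gives a nice elementary alternative to the citation.
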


\begin{proof}
The first is  Lemma 2.3, \cite{IP5}.  The second is in \cite{Liun}, we provide a sketch here.  Consider the map $\pi:M\rightarrow X\cup_{V_X=V_Y}Y$ and a decomposition $\pi_*A=A_X+A_Y$.  Representing $A_X$ and $A_Y$ by appropriate closed surfaces $C_X$ and $C_Y$ resp., we obtain a closed surface $C=\pi^{-1}(C_X\cup C_Y)$ in $M$.  The class of this surface must be $A+e$ for some $e\in\ker(\pi_*)$.  Judiciously choosing a second set of representatives $C_X'$ and $C_Y'$ and repeating this procedure to obtain a surface $C'$, we can calculate the intersection number of $A+e$ and $A+e'$ from intersections of the surfaces $C_*$ and $C_*'$.  This leads to the formula.
\end{proof}

\subsection{Minimality of 4-Manifolds}

\begin{definition} Let ${\mathcal E}_X$ be the set of cohomology
classes whose Poincar\'e dual are represented by smoothly embedded
spheres of self-intersection $-1$. $X$ is said to be (smoothly)
minimal if ${\mathcal E}_X$ is the empty set.
 \end{definition}

Equivalently, $X$ is minimal if it is not the connected sum of
another manifold $Y$  with $\overline{\mathbb {CP}^2}$. 

\begin{definition}
The manifold $X_m$
is called a minimal model of $X$ if $X_m$ is  minimal and $X$ is the
connected sum $X_m\#k\;\overline{\mathbb {CP}^2}$ for some $k>0$.
\end{definition}

The notion of minimality is also defined for a symplectic manifold $(X,\omega)$:
$(X,\omega)$  is said to be (symplectically) minimal if ${\mathcal
E}_{\omega}$ is the empty set, where
$${\mathcal E}_{\omega}=\{E\in {\mathcal
E}_X|\hbox{ $E$ is represented by an embedded $\omega-$symplectic
sphere}\}.$$ 

Such a sphere is called an exceptional sphere, its class an exceptional class.  A basic fact proven using SW theory (\cite{T1},
\cite{LL}, \cite{TJL2}) is:

\begin{lemma}
  ${\mathcal E}_{\omega}$ is
empty if and only if ${\mathcal E}_X$ is empty. In other words, $(X,
\omega)$ is symplectically minimal if and only if $X$ is smoothly
minimal.

\end{lemma}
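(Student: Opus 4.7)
The reverse inclusion $\mathcal{E}_\omega \subseteq \mathcal{E}_X$ is immediate from the definitions, so $\mathcal{E}_X = \emptyset$ trivially yields $\mathcal{E}_\omega = \emptyset$. The content lies in the converse: given a class $E$ Poincar\'e dual to a smoothly embedded $(-1)$-sphere, one must produce a symplectically embedded exceptional sphere. The plan is to use Seiberg--Witten theory to locate a $J$-holomorphic representative of $E$ (or $-E$) for some $\omega$-tame almost complex structure $J$, since $J$-holomorphic curves are automatically $\omega$-symplectic.

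First, I would invoke Taubes' theorem \cite{T1} that for any symplectic $4$-manifold $(X,\omega)$ with $b^+(X)>1$, the Seiberg--Witten invariant of the canonical spin$^c$ structure (with $c_1 = -K_\omega$) is $\pm 1$; in the $b^+(X)=1$ case, the analogous nonvanishing holds in a specific chamber. Combining this with the Seiberg--Witten adjunction inequality applied to the smooth $(-1)$-sphere representing $E$ constrains $K_\omega\cdot E$; together with the blowup formula and an analysis of the set of basic classes, one forces $K_\omega\cdot E = \pm 1$. After replacing $E$ by $-E$ if necessary, assume $K_\omega\cdot E = -1$. The blowup formula (respectively wall-crossing, when $b^+(X)=1$) then yields that the spin$^c$ structure with $c_1 = -K_\omega + 2\,\mathrm{PD}(E)$ also has nonzero Seiberg--Witten invariant.

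Next, Taubes' identification of Seiberg--Witten with Gromov invariants converts this nonzero SW count into a nonzero Gromov invariant enumerating embedded $J$-holomorphic curves in class $E$ for generic $\omega$-tame $J$. Such a curve $C$ must exist, and the symplectic adjunction formula gives
\[
2g_C - 2 = E\cdot E + K_\omega\cdot E = -1 + (-1) = -2,
\]
so $g_C = 0$. Thus $C$ is an embedded $\omega$-symplectic sphere with $C\cdot C = -1$, i.e.\ $E\in\mathcal{E}_\omega$, completing the equivalence.

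The main obstacle is the step controlling $K_\omega\cdot E$: the adjunction inequality by itself only yields $|K_\omega\cdot E|\leq 1$ and does not exclude $K_\omega\cdot E = 0$. The arguments in \cite{LL} and \cite{TJL2} must exploit the interaction between the canonical basic class, the blowup formula, and the sphere representing $E$ more delicately to rule this out. Handling the $b^+(X)=1$ case adds the further difficulty of selecting the correct chamber in which Taubes' nonvanishing applies and then transferring the nonvanishing to the spin$^c$ structure associated with $E$ via wall-crossing before applying SW $=$ Gr.
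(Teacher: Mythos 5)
The paper does not prove this lemma at all: it quotes it as a known fact and refers to \cite{T1}, \cite{LL}, \cite{TJL2}, so the only comparison to make is with the argument in those sources, and your overall architecture (Taubes' nonvanishing for the canonical class, transfer of nonvanishing to a spin$^c$ structure associated to $E$, SW $=$ Gr, then the adjunction formula to conclude the $J$-holomorphic representative is an embedded sphere) is indeed the route they take. The final steps of your sketch are fine.

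The genuine gap is the middle step, and as written it would fail. You propose to bound $K_\omega\cdot E$ by "the Seiberg--Witten adjunction inequality applied to the smooth $(-1)$-sphere representing $E$", but the adjunction inequality does not apply to spheres: for a surface of genus $0$ it would read $-2\ge E\cdot E+|K_\omega\cdot E|$, which is already violated by an honest exceptional sphere ($-2\ge -1+1$ is false), and the refined versions for negative self-intersection require genus at least $1$ and $b^+>1$. The mechanism actually used in \cite{T1}, \cite{LL}, \cite{TJL2} is different: a smoothly embedded sphere of square $-1$ has tubular neighborhood the Euler number $-1$ disk bundle, whose boundary is $S^3$, so $E$ induces a smooth splitting $X\cong X'\#\overline{\mathbb{CP}^2}$; the SW blow-up formula then transports Taubes' nonvanishing $SW(K_\omega)=\pm1$ to the class $K_\omega-2\,\mathrm{PD}(E)$ (after replacing $E$ by $-E$ if needed), and in the $b^+=1$ case the chamber bookkeeping is exactly what Li--Liu's general wall-crossing formula \cite{LL} supplies. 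Also, the difficulty you single out --- excluding $K_\omega\cdot E=0$ --- is vacuous: $K_\omega$ is a characteristic class, so $K_\omega\cdot E\equiv E\cdot E=-1\pmod 2$ and $K_\omega\cdot E$ is automatically odd. So the hard content is not ruling out the even value but establishing $|K_\omega\cdot E|=1$ together with the nonvanishing of the invariant for the correct spin$^c$ structure, and for that your sketch defers entirely to the references without an argument; with the splitting-plus-blow-up-formula mechanism substituted for the adjunction inequality, the outline becomes the standard proof.
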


A manifold $X$ is called rational if its underlying smooth manifold is either $(S^2\times S^2)\#k\;\overline{\mathbb CP^2}$ or $\mathbb CP^2\#k\;\overline{\mathbb CP^2}$ for some $k\ge 0$.  A manifold $X$ is ruled if the underlying smooth manifold is a connected sum of a $S^2$-bundle over a Riemann surface with $k$ copies of $\overline{\mathbb CP^2}$, $k\ge 0$.  The following two results will be useful:

\begin{lemma}\label{inters}\begin{enumerate}
\item (Thm. 1.5, \cite{M2}) If $X$ is not rational or ruled, then $X_m$ is unique.  

\item (Cor. 3, \cite{TJL2})
If $X$ is not rational or ruled, then no two distinct exceptional spheres intersect.
\end{enumerate}
\end{lemma}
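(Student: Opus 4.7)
Since this lemma merely quotes Theorem 1.5 of \cite{M2} and Corollary 3 of \cite{TJL2}, my proof proposal sketches only the Seiberg--Witten-theoretic strategy underlying both statements, rather than a self-contained argument. The common input is that for a non-rational, non-ruled symplectic 4-manifold the SW invariants are nontrivial (by Taubes) and their basic-class set is rigid enough to control the structure of exceptional classes.

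For part (1), I would define $X_m$ as the result of simultaneously blowing down any maximal collection of pairwise disjoint symplectic $-1$-spheres in $X$, and ask whether different maximal collections yield diffeomorphic blow-downs. First I would note that the set $\mathcal{E}_X$ is a diffeomorphism invariant of $X$, so the homology classes available for blowing down are intrinsic. Then I would apply the blow-up formula for SW invariants: for non-rational/ruled $X$, every basic class of $X$ has the shape $\widetilde{K}\pm\sum \epsilon_i E_i$, where $\widetilde{K}$ pulls back from a basic class of $X_m$ and the $E_i$ range over a maximal orthogonal family in $\mathcal{E}_X$. Since the SW-invariant data is a diffeomorphism invariant of $X$, this description forces the pair $(X_m,\{E_i\})$, and in particular $X_m$, to depend only on $X$ up to diffeomorphism.

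For part (2), let $E_1,E_2\in\mathcal{E}_X$ be distinct, with embedded symplectic sphere representatives. Choose an almost complex structure $J$ tamed by $\omega$ making both $J$-holomorphic; by positivity of intersections for $J$-holomorphic curves, $E_1\cdot E_2\geq 0$, with equality iff the representatives are geometrically disjoint. The main obstacle, and the step that genuinely uses the non-rational/ruled hypothesis, is to prove the reverse inequality $E_1\cdot E_2\leq 0$ at the homological level. Here I would invoke Taubes's SW $=$ Gr correspondence: the SW basic classes of a non-rational, non-ruled $X$ are constrained so that distinct exceptional classes must span an orthogonal sublattice in $H^2(X;\mathbb{Z})$, since otherwise pairing against suitable basic classes would force Gromov invariants of curves with forbidden adjunction data. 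Combining the two inequalities yields $E_1\cdot E_2=0$, and positivity of intersections then upgrades algebraic disjointness to geometric disjointness, completing the sketch.
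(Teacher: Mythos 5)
The paper offers no argument for this lemma at all: both parts are quoted directly from McDuff (Thm.~1.5 of \cite{M2}) and Li (Cor.~3 of \cite{TJL2}), so there is no internal proof for your sketch to match, and it must be judged on its own. For part (2) you do point at the right machinery (Taubes plus basic-class constraints), but the crucial step is missing: the entire content of the corollary is the inequality $E_1\cdot E_2\le 0$, and your justification --- that the basic classes are ``constrained so that distinct exceptional classes must span an orthogonal sublattice'' --- is a restatement of the conclusion rather than an argument. The actual proof runs roughly as follows: the blow-up formula shows $K-2E_1$ is a basic class, Taubes' SW$\Rightarrow$Gr converts this into a $J$-holomorphic representative of $K-E_1$ for generic $J$, and pairing it against the embedded $J$-holomorphic sphere in $E_2$, using $(K-E_1)\cdot E_2=-1-E_1\cdot E_2$ and positivity of intersections, produces the contradiction when $E_1\cdot E_2>0$. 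Moreover, non-rational, non-ruled manifolds with $b^+=1$ exist (Dolgachev surfaces, for instance), and for these the chamber dependence of SW invariants forces one to use the Li--Liu wall-crossing formula; your premise that ``the SW invariants are nontrivial (by Taubes)'' does not cover this case, which is precisely where \cite{TJL2} does real work.

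For part (1) there are two problems. First, the cited theorem of McDuff predates Seiberg--Witten theory and is proved with $J$-holomorphic sphere techniques, so the claimed ``common SW input'' is not accurate for the result as cited (though an SW-flavored proof is not unreasonable to attempt). Second, and more seriously, your concluding step is invalid: from the decomposition of basic classes as $\widetilde K\pm\sum\epsilon_iE_i$ and the diffeomorphism invariance of the SW data of $X$ you cannot conclude that $X_m$ ``depends only on $X$,'' because SW invariants do not determine diffeomorphism type; two different-looking blow-downs could a priori have identical SW data and still be non-diffeomorphic, or the decomposition could be realized by genuinely different pairs $(X_m,\{E_i\})$. What a correct argument needs is the orthogonality from part (2) together with finiteness of $\mathcal{E}_X$, so that for generic $J$ all exceptional classes are represented by pairwise disjoint embedded spheres, any maximal collection of disjoint exceptional spheres represents all of $\mathcal{E}_X$, and hence every maximal blow-down collapses the same set of classes and yields the same minimal model. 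As written, your part (1) effectively assumes the uniqueness it is meant to establish.
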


The process for obtaining a minimal manifold from $X$ is called blowing down.  This removes the $-1$-sphere and can be obtained as a fiber sum of $(X,E)$ with $(\mathbb CP^2,H)$, i.e. the blown-down manifold $Y=X\#_{E=H}\mathbb CP^2$.  It can be shown, that after blowing down a finite collection of exceptional spheres, a minimal manifold is obtained, see Thm 1.1, \cite{M}.

The symplectic fiber sum involves a submanifold $V_X\subset X$.  It is therefore reasonable to consider minimality with respect to this submanifold:

\begin{definition} 
The pair $(X,V_X)$ is called relatively minimal if there exist no exceptional spheres $E$ such that $E\cdot [V]=0$.
\end{definition}

The following result shows that we can blow down $X$ in such a manner that $V$ is preserved and the result is relatively minimal:

\begin{lemma}[Thm 1.1ii, \cite{M}] Every symplectic pair $(X,V_X)$ covers a relatively minimal symplectic pair $(\tilde X,V_X)$ which may be obtained by blowing down a finite set of exceptional spheres disjoint from $V$.

\end{lemma}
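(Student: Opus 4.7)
The plan is to show that whenever $(X,V_X)$ is not relatively minimal, one can find an exceptional sphere that is \emph{geometrically} disjoint from $V_X$, blow it down to produce a symplectic pair $(X_1,V_X)$ with $b_2(X_1)=b_2(X)-1$, and iterate. Since $b_2$ is nonnegative and decreases by one at each step, the procedure terminates after finitely many blow-downs at a relatively minimal pair $(\tilde X,V_X)$.

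The core step is the passage from algebraic to geometric disjointness. Suppose an exceptional class $E$ satisfies $E\cdot[V_X]=0$. Pick an $\omega_X$-tame almost complex structure $J$ for which $V_X$ is $J$-holomorphic; such a $J$ exists by the standard neighborhood theorem for symplectic submanifolds. By the Seiberg--Witten theoretic results of Taubes \cite{T1} and Li--Liu \cite{LL} (see also \cite{TJL2}), the class $E$ is represented by an embedded $J$-holomorphic sphere $S_E$. Note that $[S_E]\neq[V_X]$, since otherwise $[V_X]^2=[S_E]^2=-1$ while simultaneously $[V_X]^2=[S_E]\cdot[V_X]=E\cdot[V_X]=0$. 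Hence $S_E$ and $V_X$ are distinct irreducible $J$-holomorphic curves, so by positivity of intersections in dimension four each point of $S_E\cap V_X$ contributes positively to $E\cdot[V_X]=0$, forcing $S_E\cap V_X=\emptyset$.

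With $S_E$ disjoint from $V_X$, the symplectic blow-down of $S_E$ may be performed inside a tubular neighborhood that misses $V_X$; equivalently, it is the symplectic fiber sum $X\#_{E=H}\mathbb{CP}^2$ discussed just before the statement. The resulting pair $(X_1,V_X)$ still contains $V_X$ as a symplectic submanifold and satisfies $b_2(X_1)=b_2(X)-1$. If it is not relatively minimal, apply the argument again to $(X_1,V_X)$; termination is forced by the descending sequence of second Betti numbers.

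The principal obstacle is the reduction to geometric disjointness. Without the Seiberg--Witten input that exceptional classes are represented by embedded $J$-holomorphic spheres for suitable $J$, one would only know that $E$ has \emph{some} symplectic sphere representative, which a priori could intersect $V_X$ in cancelling transverse pairs. The positivity of intersections of $J$-holomorphic curves is precisely what rules this out and lets the blow-down be performed without disturbing $V_X$.
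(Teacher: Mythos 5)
First, note that the paper itself offers no argument for this statement: it is quoted verbatim as Theorem 1.1(ii) of McDuff \cite{M}, so there is no "paper proof" to match yours against. Your skeleton --- represent an exceptional class $E$ with $E\cdot[V_X]=0$ by a $J$-holomorphic sphere for a $J$ making $V_X$ pseudoholomorphic, use positivity of intersections to get geometric disjointness from $V_X$, blow down inside a neighborhood missing $V_X$, and induct on $b_2$ --- is indeed the natural route, and the endgame (distinctness of $[S_E]$ and $[V_X]$, positivity of intersections, termination via $b_2$) is fine.

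The genuine gap is the middle step: "pick an $\omega_X$-tame $J$ for which $V_X$ is $J$-holomorphic; by \cite{T1}, \cite{LL} the class $E$ is represented by an \emph{embedded} $J$-holomorphic sphere." The Seiberg--Witten/Taubes input gives nontriviality of the Gromov--Taubes invariant of $E$, and hence an embedded representative only for \emph{generic} tame $J$; for a fixed, non-generic $J$ it gives merely a stable representative, which may be reducible, may have multiply covered components, and --- crucially here --- may have components mapped into $V_X$ or covering $V_X$. The almost complex structures you are allowed to use are constrained to make $V_X$ holomorphic, so you cannot appeal to unconstrained genericity, and this is exactly where the content of McDuff's theorem lies (otherwise the lemma would be an immediate corollary of the SW results). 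To close the gap you must either (i) work with $J$ generic \emph{among} $V_X$-compatible structures and show that reducible representatives and representatives with components in $V_X$ occupy strata of negative expected dimension --- the sort of analysis the paper carries out for relative invariants in Lemma \ref{index}, which additionally requires care with multiple covers and with covers of $V_X$ when $[V_X]^2<0$ --- or (ii) make a more careful choice of $E$ (e.g.\ of minimal $\omega$-area among exceptional classes orthogonal to $[V_X]$) and show that any Gromov degeneration would produce a smaller-area exceptional sphere still disjoint from $V_X$, or simply invoke the relative statements already in the literature (\cite{M}, or \cite{LU}). As written, the assertion that your chosen $J$ admits an embedded irreducible representative of the prescribed class $E$ is unsupported, and it is the whole difficulty of the lemma.
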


\subsection{Gromov-Witten Invariants and Fiber Sums}

The main tool in the proof of minimality will be the sum formula for Gromov-Witten invariants.  This formula relates the absolute GW-invariants of $M$ to the relative GW-invariants of $(X,V_X)$ and $(Y,V_Y)$.  This section introduces relevant notation and briefly describes the setup.

\subsubsection{Connected Invariants}

A stable pseudoholomorphic map is a pair $(\Sigma, \phi)$ consisting of a connected Riemann surface $\Sigma$ of genus $g$ and a pseudoholomorphic map $\phi:\Sigma\rightarrow X$ such that the pair $(\Sigma,\phi)$ has finite automorphism group.  A curve is an equivalence class of such maps, two maps $(\Sigma,\phi)$ and $(\Sigma',\phi')$ being equivalent if the diagram
\[
\begin{diagram}
\node{\Sigma}\arrow{s}\arrow{e,t}{\phi}\node{X}\\
\node{\Sigma'}\arrow{ne,r}{\phi'}
\end{diagram}
\]
commutes.  If we have additional data on $\Sigma$, for example markings, then the diagram is expected to respect this data as well.  Equivalently, we call the image $\phi(\Sigma)$ a curve in $X$.

In short, absolute Gromov-Witten invariants $\langle \alpha_1,...,\alpha_n\rangle^X_{g,A}$ are numerical invariants of the symplectic manifold $(X,\omega)$ associated to equivalence classes of stable pseudoholomorphic maps $(\Sigma,\phi)$ representing the class $A\in H_2(X)$ with genus $g$ and meeting representatives of the Poincar\'e dual classes of $\alpha_i\in H^*(X)$.  One method to calculate this number is to integrate an appropriate choice of classes over the virtual fundamental cycle of the appropriate moduli space of curves, details can be found in \cite{LR}.  The dimension of this virtual cycle equals the index associated to these curves, and hence, for the GW-invariant to be a meaningful number, the sum of the dimensions of the constraints must equal the index:
\[
\sum\mbox{ deg }\alpha_i\;=\; 2\left(-K_X\cdot A+(g-1)+n \right).
\]  
If this is not the case, the GW-invariant is defined to be zero.

\begin{example}\label{-1}
Let $E\subset X$ be an exceptional sphere, i.e. the embedded image of a sphere with $E^2=-1$.  Moreover, by the adjunction formula, $K_X\cdot E=-1$, and by positivity of intersections, we can have at most one such curve for a given $J$.  Furthermore, the dimension of the moduli space of exceptional spheres is
\[
-K_X\cdot A+(g-1)=0
\]
and thus we have no insertions in the Gromov-Witten invariant save an $X$ insertion (which we omit), which provides no constraint.  In particular, we obtain (see for example \cite{MS})
\[
\langle\; \rangle^X_{0,E}=1.
\]
\end{example} 

The relative Gromov-Witten invariant $\langle \alpha_1,...,\alpha_n\vert\beta_1,...,\beta_r\rangle^{X,V_X}_{g,A,{\bf s}}$ has a similar interpretation, however the underlying moduli space of stable pseudoholomorphic maps contains only such curves which, in addition to meeting representatives of the Poincar\'e dual classes of $\alpha_i\in H^*(X)$,  meet the hypersurface $V_X\subset X$ at points $\{p_1,..,p_r\}$ with prescribed orders ${\bf s}=(s_1,..,s_r)$ and at representatives $B_i$ of the Poincar\'e duals of the classes $\beta_i\in H^*(V)$ with $p_i\in B_i$.

As before, we must have
\[
\sum\mbox{ deg }\alpha_i\;+\;\sum \mbox{ deg }\beta_i\;=\; 2\left(-K_X\cdot A+(g-1)+n+r-\sum s_i \right)
\]
with $\sum s_i=A\cdot[V_X]$ for the invariant to perhaps be non-trivial.  Note that the $\beta_i$ lie in the cohomology of $V$, hence can have at most degree 2, while the classes $\alpha_i$ lie in the cohomology of $X$ and thus can have degree at most 4.

It should be noted, that we will only be interested in the case that $V_X$ is a sphere.  Thus $H_1(V_X)=0$ and we may avoid any rim tori discussions, see \cite{IP4}.

\subsubsection{Disconnected Invariants}

Our main interest will be focused on connected curves in the sum $M=X\#_VY$ which we will research with regard to the components in $X$ and $Y$ obtained after cutting $M$.  These component curves $C_X$ and $C_Y$ will be regarded relative to a symplectic hypersurface $V$, but they need not be connected.  We now introduce notation for disconnected relative invariants.

Let $T$ be a labelled graph with tails.  Each tail will be labelled with an $s_i$ and denote the contact order of intersection with $V$ of the node from which the tail emanates.  The graph can be disconnected and, if the genus of the curve is 0, the graph will be a labelled tree.  We define a stable pseudoholomorphic map $(T,\phi)$ on this graph and, by a commutative diagram as in the connected case, an equivalence class of such maps.  As before, we call such classes curves. 

In accordance with the relative notation described above, denote a relative invariant by $\langle \alpha_1,...,\alpha_n\vert\beta_1,...,\beta_r\rangle^{X,V_X}_{T,{\bf s}}$, where $T$ contains the necessary information on the arithmetic genus and class of the curve.  The ordering of the $\beta_i$ and $\bf s$ will associate a cohomology class to each tail of $T$.

The disconnected invariant is defined as the product of the connected relative invariants $\langle \alpha_1,...,\alpha_{n_k}\vert\beta_1,...,\beta_{r_k}\rangle^{X,V_X}_{g_k,A_k,{\bf s}_k}$ of each connected component $T_k$ of $T$.

\subsubsection{The Sum Formula for GW-Invariants}
The sum formula developed in \cite{LR} relates these two invariants:  If $M=X\#_VY$ is a symplectic fiber sum, then 
\begin{equation}
\langle\alpha_1,...,\alpha_n\rangle^M_{g,A}=
\end{equation}
\[
=\sum \langle\;\tilde\alpha_1,...,\tilde\alpha_{n_1}\vert\beta_1,...,\beta_k\rangle^{X,V_X}_{T_X,{\bf s}}\;\delta_{T,{\bf s}}\;\langle\tilde\alpha_{n_1+1},...,\tilde\alpha_{n_2}\vert \hat\beta_1,...,\hat\beta_k\rangle^{Y,V_Y}_{T_Y,{\bf s}}
\]
where the sum is taken over all possible connected labelled graphs $T=T_X\cup T_Y$, decompositions of the $\alpha_i$ and possible insertions $\beta_i$.  The classes $\hat\beta$ are dual to the classes $\beta$ in $H^*(V)$.  The number $\delta_{T,{\bf s}}$ takes into account automorphisms of the labelled graph $T$, see \cite{Li1}.  For the purposes of determining the number of exceptional spheres in the symplectic sum, this number will be 1 throughout.

Intuitively, this formula states that any $J$-holomorphic curve in $M$ must be produced in the sum from the combination of two curves in $X$ and $Y$, keeping in mind that such a curve could lie wholly in $X$ or $Y$, and that therefore the absolute invariant on $M$ should be calculable by combining all possible allowed curves in $X$ and $Y$.  It should be emphasized again, that we must allow the curves in the summands to be disjoint tuples of curves such that when summed we obtain a connected genus $g$ curve representing the class $A$.  For example, we could have two disjoint curves of genus $0$ in $X$ representing the classes $A_1$ and $A_2$ such that $A_1^2+A_2^2=-2$ summed with a curve in class $[H]$ in $\mathbb CP^2$ to obtain a connected genus $0$ curve with square $-1$.  Configurations of this type must be carefully considered when calculating the dimension of the respective moduli spaces of maps.

\subsubsection{Degenerate Curves}
The intuitive understanding of the sum formula is easily seen to be reasonable, as long as one can prevent curves in $X$ or $Y$ from having components that lie in the hypersurface $V$.  Curves with components in the hypersurface $V$ would appear to lose these parts under the fiber sum operation, yet the whole curve would contribute to the relative invariants on the right side of the sum formula.  Moreover, in the moduli space of curves, any sequence of curves will converge to a limit by Gromov compactness and we cannot a priori assume that this limit does not have components mapping into $V$.  This is a very subtle issue, we now describe how to handle such limit curves.  This is described in detail in \cite{M3} and \cite{HLR} (see also \cite{LR} and \cite{Li1}, \cite{M3} contains numerous examples of this construction).  

The idea is to extend the manifold $X$ in such a manner, that the components descending into $V_X$ get stretched out and become discernible.  This extension is achieved by gluing $X$ along $V_X$ to the projective completion of the normal bundle $N_{V_X}$.  This completion is denoted by $Q=\mathbb P(N_{V_X}\oplus \mathbb C)$ and it comes with a natural fiberwise $\mathbb C^*$ action.  The ruled surface $Q$ contains two sections, the zero section $V_0$, which has opposite orientation to $V_X$, and the infinity section $V_\infty$, which is a copy of $V_X$ with the same orientation both of which are preserved by the $\mathbb C^*$ action.  The manifold $X\#_{V_X=V_0}Q$ is symplectomorphic to $X$ and can be viewed as a stretching of the neighborhood of $V_X$.  This stretching can be done any finite number of times.  Therefore consider the singular manifold $X_m=X\sqcup_{V=V_0}Q_1\sqcup_{V_\infty=V_0}...\sqcup_{V_\infty=V_0}Q_m$ which as been stretched $m$ times.  This will provide the extended target for the preglued curves which we now describe.  

Any map $(\Sigma, \phi)$ into $X$ with components lying in $V_X$ can be viewed as a map into $X_m$ consisting of a number components:  Decompose $\Sigma$ into $\Sigma_i$, possibly disconnected, such that $\phi(\Sigma_i)\subset Q_i$ (with $X=Q_0$).  Then each curve $C=\phi(\Sigma)$ defines a decomposition into levels $C_i$ which lie in $Q_i$.  Each must satisfy a number of contact conditions:  $C_i$ and $C_{i+1}$ contact along $V$ in their respective components $(Q_i,V_\infty)$ and $(Q_{i+1},V_0)$ such that contact orders and contact points match up.  The imposed contact conditions on $V_X$ given in $\bf s$ are imposed on the level $C_m$ where it contacts $V_\infty$ of $Q_m$.  

The curves $C_i$, viewed as maps into $X_m$, must satisfy certain stability conditions.  In $X$, these are the well known standard conditions on the finiteness of the automorphism group.  For those mapping into $Q_i$, $i>0$, we identify any two submanifolds which can be mapped onto each other by the $\mathbb C^*$ action of $Q$.

We obtain a curve $C_g$ in $X$ meeting $V_X$ as prescribed by $\bf s$ from $C_0\cup C_1\cup...\cup C_m$ by gluing along $V$ in each level.  The homology class of the preglued curve $C$ is calculated as the sum of the homology class of $C_0$ and the projections of the class of $C_i$ into $H_2(V)$.  The projections in $4$-dimensions are just the classes $[V]$ (or  multiples of $[V]$) and $[pt]$ in $H_2(V)$, the images of a section and a fiber respectively.

For each such curve $C$ representing the class $A$, it is possible to determine the index of the associated differential operator and hence determine the dimension of the strata in which it lies from this construction.  In the following calculation we shall assume that there are no absolute insertions, this is the case of interest in in Section \ref{min}.  At level $i>0$ denote the following:
\begin{itemize}
\item $r_{i0}$ and $r_{i\infty}$ are the number of contact points of the curve $C_i\subset Q$ with $V_0$ and $V_\infty$ respectively,
\item the (arithmetic) genus of $C_i$ is $g_i$ and
\item $[V_0]$ and $[V_\infty]$ the classes of the zero and infinity divisor in $H_2(Q)$.
\end{itemize}
Then the index of $C_i$ is given by 
\[
I_i=-K_Q\cdot [C_i]+(g_i-1)+r_{i0}-[C_i]\cdot [V_0]+r_{i\infty}-[C_i]\cdot [V_\infty]-1,
\]
where the last subtraction is due to the $\mathbb C^*$-action.  At level $i=0$ we have the index 
\[
I_0=-K_X\cdot [C_0]+(g_0-1)+r_{0}-[C_0]\cdot [V_X].
\]
Note that $[C_i]\cdot [V_\infty]=[C_{i+1}]\cdot [V_0]$ and $r_{i+1\;0}=r_{i\infty}$.  The latter implies $\sum _{i=1}^mr_{i0}=\sum_{i=0}^{m-1}r_{i\infty}$.  Moreover, a curve with $r$ contact points on $V$ has $r=r_{m\infty}$.  The index $I_C$ of the $m+1$ level curve $C$ is then given by 
\[
I_C= \sum_{i=0}^mI_i-2\sum_{i=1}^m r_{i0}
\]
where the last term accounts for the transversality conditions at the intersections of the $C_i$ which are needed to ensure that we can glue to obtain a curve in $X$ as described above.  Combining these results with Lemma \ref{formulas} and using the fact that the genus $g$ of the glued curve $C$ is given by \[
g=\sum_{i=0}^m g_i+\sum_{i=1}^m(r_{i0}-1)=\sum_{i=0}^m g_i+\sum_{i=1}^mr_{i0}-m
\]
allows us to calculate the index of a curve $C$ obtained by gluing $m+1$ levels $\{C_i\}$ with no absolute markings:

\begin{eqnarray}
\nonumber I_C&=&\sum_{i=0}^mI_i-2\sum_{i=1}^m r_{i0}\\
\nonumber &=&-K_X\cdot [C_0]+(g_0-1)+r_{0\infty}-[C_0]\cdot [V_X]\\
\nonumber &&+\sum_{i=1}^m\{-K_Q\cdot [C_i]+(g_i-1)+r_{i0}-[C_i]\cdot [V_0]+r_{i\infty}\\
\nonumber &&-[C_i]\cdot [V_\infty]-1\}-2\sum_{i=1}^m r_{i0}\\
\nonumber &=&-\left(K_X\cdot[C_0]+\sum_{i=1}^m \left\{K_Q\cdot[C_i]+2[C_i]\cdot[V_0]\right\}\right)\\
\nonumber &&+\sum_{i=0}^m (g_i-1)+\sum_{i=0}^{m-1}r_{i\infty}+r_{m\infty}-A\cdot [V_X]-m-\sum_{i=1}^m r_{i0}\\
\nonumber &=&-K_X\cdot A+ \left(\sum_{i=0}^m(g_i-1)+\sum_{i=1}^mr_{i0}\right)+r-A\cdot [V_X]-m-\sum_{i=1}^m r_{i0}\\
\nonumber &=&-K_X\cdot A+(g-1)+r-A\cdot [V_X]-m-\sum_{i=1}^m r_{i0}.
\end{eqnarray}

%\[
%I_C=\sum_{i=0}^mI_i-2\sum_{i=1}^m r_{i0}=
%\]
%\[
%=-K_X\cdot [C_0]+(g_0-1)+r_{0\infty}-[C_0]\cdot [V_X]+
%\]
%\[
%+\sum_{i=1}^m\left\{-K_Q\cdot [C_i]+(g_i-1)+r_{i0}-[C_i]\cdot [V_0]+r_{i\infty}-[C_i]\cdot [V_\infty]-1\right\}-
%\]
%\[
%-2\sum_{i=1}^m r_{i0}=
%\]
%\[
%=-\left(K_X\cdot[C_0]+\sum_{i=1}^m \left\{K_Q\cdot[C_i]+2[C_i]\cdot[V_0]\right\}\right)+
%\]
%\[
%+\sum_{i=0}^m (g_i-1)+\sum_{i=0}^{m-1}r_{i\infty}+r_{m\infty}-A\cdot [V_X]-m-\sum_{i=1}^m r_{i0}=
%\]
%\[
%=-K_X\cdot A+ \left(\sum_{i=0}^m(g_i-1)+\sum_{i=1}^mr_{i0}\right)+r-A\cdot [V_X]-m-\sum_{i=1}^m r_{i0}=
%\]
%\[
%=-K_X\cdot A+(g-1)+r-A\cdot [V_X]-m-\sum_{i=1}^m r_{i0}.
%\]
Thus we obtain

\begin{lemma}\label{index} Assume $(X,V)$ is a symplectic pair and dim $X=4$.  Let $C$ be a preglued curve with $m+1$ levels representing the class $A\in H_2(X)$ with no absolute marked points, $r$ relative marked points on $V$ and prescribed contact orders with $V$ given in ${\bf s}=(s_1,...,s_r)$.  Then, for generic almost complex structures $J$ among those making $V_X$ pseudoholomorphic, the curve $C$ lies in a strata of dimension no larger than
\begin{equation}
I(A,g)-m
\end{equation}
where $I(A,g)=-K_X\cdot A+(g-1)+r-\sum s_i$ and $\sum s_i=A\cdot [V_X]$.
\end{lemma}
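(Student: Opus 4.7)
The plan is to assemble the pieces the author has already laid out in the preceding discussion into a clean index computation, and then observe that a nonnegativity of intersection numbers yields the desired inequality.

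First I would justify the two index formulas that were stated for the levels. At level $0$ the curve $C_0$ lies in $X$ and represents a class with $r_{0\infty}$ contact points with $V_X=V_\infty$ of orders summing to $[C_0]\cdot[V_X]$; the standard relative virtual dimension formula for a genus $g_0$ map with $r_{0\infty}$ contacts and no absolute markings gives $I_0=-K_X\cdot[C_0]+(g_0-1)+r_{0\infty}-[C_0]\cdot[V_X]$. At each higher level $i\ge 1$ the curve $C_i$ lies in the ruled surface $Q$ and meets both $V_0$ and $V_\infty$, giving two sets of contact constraints; the extra $-1$ appears because we quotient by the fiberwise $\mathbb{C}^*$-action on $Q$, yielding the stated $I_i$. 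The matching conditions at the $m$ intermediate divisors contribute $-2\sum_{i=1}^m r_{i0}$ to the index of the glued family: each of the $r_{i0}$ contact points on $V_\infty$ of $Q_{i-1}$ must coincide in $V$ with the corresponding contact point on $V_0$ of $Q_i$, which is a codimension-$2$ condition.

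Next I would combine these expressions using three bookkeeping identities already noted in the text: the matching relations $r_{i+1,0}=r_{i\infty}$ (so $\sum_{i=1}^m r_{i0}=\sum_{i=0}^{m-1}r_{i\infty}$), the final contact count $r=r_{m\infty}$, and the genus formula $g=\sum_{i=0}^m g_i+\sum_{i=1}^m r_{i0}-m$ for the glued curve. In parallel I would apply Lemma \ref{formulas} in the form $K_X\cdot A=K_X\cdot[C_0]+\sum_{i=1}^m(K_Q\cdot[C_i]+2[C_i]\cdot[V_0])$; this is the version of the canonical class splitting adapted to the projective completion, where the contribution of each $Q_i$-component projects down to $X$ via the $\mathbb{C}^*$-action and the factor of $2$ arises because each $Q_i$ is glued along both $V_0$ and $V_\infty$, exactly as encoded by $[V_X]$ in the ordinary sum formula. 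With these ingredients in place, the telescoping computation displayed in the excerpt produces
\begin{equation*}
I_C=-K_X\cdot A+(g-1)+r-A\cdot[V_X]-m-\sum_{i=1}^m r_{i0}.
\end{equation*}

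Finally, since $\sum s_i=A\cdot[V_X]$ by the definition of the contact orders and since $r_{i0}\ge 0$ for each $i\ge 1$, the identity above immediately yields $I_C\le I(A,g)-m$, which is the claimed bound on the dimension of the stratum. The genericity of $J$ among almost complex structures making $V_X$ pseudoholomorphic is used only to guarantee that the virtual dimension equals the actual dimension of the stratum of preglued curves, so no further argument is needed beyond citing standard transversality for $V_X$-compatible $J$. The main subtlety, as already flagged in the surrounding discussion, is the correct handling of the $-1$ per level coming from the $\mathbb{C}^*$-action and the $-2$ per matching point; once these are booked correctly, the rest is arithmetic.
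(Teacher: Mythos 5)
Your proposal is correct and follows essentially the same route as the paper: the paper's argument is exactly the displayed telescoping computation of $I_C=\sum_i I_i-2\sum_{i\ge 1}r_{i0}$ using the level indices (with the $-1$ per level from the $\mathbb{C}^*$-action), the matching identities $r_{i+1,0}=r_{i\infty}$, $r=r_{m\infty}$, the genus formula, and Lemma \ref{formulas}, followed by the observation that $\sum s_i=A\cdot[V_X]$ and $r_{i0}\ge 0$ give $I_C\le I(A,g)-m$. Nothing essential differs from the paper's treatment.
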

It follows, that such degenerate curves lie in strictly lower dimensional strata of the moduli space and hence, for generic $J$, do not contribute to the GW-invariants in the formula above.  We may thus discount such curves in the Gromov-Witten theoretic arguments of the following section.

\section{Symplectic Sums along Spheres\label{spheresum}}

In four dimensions, the condition on the Euler classes of the normal bundles of $V$ in $X$ resp. $Y$ can be written in terms of the squares of the classes of $V$:
\[
[V_X]^2+[V_Y]^2=0.
\]
We shall assume that $[V_Y]^2\ge 0$.  As we are summing along a sphere, Thm. 1.4, \cite{M}, shows that we must have one of the following four cases for $(Y,V_Y)$ (This can also be found in \cite{G}.):
\begin{itemize}
\item $(Y,V_Y)=(\mathbb CP^2,H)$, 
\item $(Y,V_Y)=(\mathbb CP^2,2H)$,
\item $Y$ a $S^2$-bundle over a genus $g$ surface, $V_Y$ a fiber, or
\item $Y$ a $S^2$-bundle over sphere, $V_Y$ a section.
\end{itemize} 

The last case, with $V_Y$ a section, has already been used in Section \ref{pre} when describing higher level curves.  The manifold $Q$ is precisely such a ruled manifold.  The first case would imply that $V_X$ is an embedded sphere of self-intersection $-1$, hence this is just the standard blow-down of an exceptional sphere.

A symplectic sum with $(Y,V_Y)=(\mathbb CP^2,2H)$ is called a rational blow down of a $-4$-sphere in $X$.  This will be the most interesting case to be considered, so we provide some examples.

\begin{example}\label{easy}  It is easy to construct a symplectic $-4$-sphere in any symplectic manifold $X$:  Blow-up a point $p\in X$ to obtain an exceptional sphere $e_1$.  Then blow-up three distinct points on $e_1$ to change the first exceptional sphere into an embedded symplectic  $-4$-sphere $V$.  Note that $[e_1]\cdot [V]=-1$.

\end{example}

In particular, this type of $-4$-sphere is the only type to be found in irrationally ruled surfaces:

\begin{example}[Ruled Surfaces]  Let $X$ be an irrationally ruled surface, i.e. a surface with minimal model a $S^2$-bundle over a Riemann surface $\Sigma_g$ with $g>0$.  For a given surface $\Sigma_g$, there are only two $S^2$-bundles over it, the trivial bundle $\Sigma_g\times S^2$ and the nontrivial bundle $\Sigma_g\tilde\times S^2$.  Note that $(\Sigma_g\times S^2)\#k\;\overline{\mathbb CP^2}$ is symplectomorphic to $(\Sigma_g\tilde\times S^2)\#k \;\overline{\mathbb CP^2}$ for $k>0$.

Clearly, any rational curve must lie in a fiber of the bundle, hence in particular neither $\Sigma_g\times S^2$ nor $\Sigma_g\tilde\times S^2$ contain a $-4$-sphere.  Denoting the fiber class by $f$ and the classes of the $k$ exceptional spheres by $e_i$, a rational curve must lie in class $af-\sum_{i=1}^k a_ie_i$.  In the case of a $-4$-sphere, we obtain the relations
\[
-4=-\sum_{i=1}^k a_i^2\mbox{    and    }2=-2a+\sum_{i=1}^k a_i.
\]
From this it clearly follows that $k\le 4$ for the pair $(X,V_X)$ to be relatively minimal.  Moreover,  the only solutions to the first equation are
\[
k=1:\;\;a_1=\pm 2,\;\;\mbox{  or  }\;\;k=4:\;\;a_i=\pm 1.
\]
The $k=1$ case would lead to $a=0$ or $a=-2$ and the classes $[V_X]=-2e_1$ or $-2(f-e_1)$.  However, $e_1$ and $f-e_1$ are representable by symplectic surfaces (Both are exceptional spheres!), hence the classes obtained for $[V_X]$ are not.

The $k=4$ case allows for 5 possible combinations of signs for the $a_i$ (assuming no ordering of the $a_i$), each leading to a new value of $a$.  As it turns out, each can be understood as the blow-up of an exceptional sphere in three points.  They are summarized in the following table: 

{\renewcommand{\arraystretch}{1.5}
\[
\begin{array}{|c|c|c|}\hline
(a_1,a_2,a_3,a_4)&a&af-\sum a_i=[V_X]\\\hline
(1,1,1,1)&1&(f-e_1)-e_2-e_3-e_4\\\hline
(-1,1,1,1)&0&e_1-e_2-e_3-e_4\\\hline
(-1,-1,1,1)&-1&e_1-(f-e_2)-e_3-e_4\\\hline
(-1,-1,-1,1)&-2&e_1-(f-e_2)-(f-e_3)-e_4\\\hline
(-1,-1,-1,-1)&-3&e_1-(f-e_2)-(f-e_3)-(f-e_4)\\\hline
\end{array}
\]}

\end{example}

\begin{example}\label{K3}
Let $\tilde X$ be a Kummer surface, i.e. a K3 surface with embedded symplectic $-2$ spheres.  Then blow up $\tilde X$ at two distinct points on a single $-2$ sphere $\tilde V$.  The new manifold $X=\tilde X\#2\;\overline{\mathbb CP^2}$ contains a symplectic submanifold $V_X$ which is a sphere of square $-4$, this is the proper transform of $\tilde V$.  Note that $V_X$ intersects each exceptional sphere in a single point.

In any elliptic K3 surface, we can obtain a $-4$-sphere by blowing up the nodal point in a fishtail fiber.  In this case, the intersection of the sphere with the exceptional fiber is 2.  
\end{example}
  
\begin{example}[Rational Manifolds containing $-4$-spheres]\label{rational}
Rational manifolds are $X=\mathbb CP^2\#k\;\overline{\mathbb CP^2}$ with $k\ge0$.  We would like to find classes $A$ such that $A$ is representable by an embedded symplectic $-4$-sphere.  We construct examples as follows:  We take an immersed pseudoholomorphic sphere, with double points as its only immersion points, in $\mathbb CP^2$.  Blowing up the nodal points and then, if necessary, blowing up further, we obtain a symplectic embedded sphere with self-intersection $-4$.

Thus we begin with a class $a[H]$ which is representable by an immersed or embedded pseudoholomorphic sphere in $\mathbb CP^2$.  Gromov-Witten theory ensures, that for each $a>0$ there exist immersed or embedded pseuodoholomorphic spherical representatives.  Denoting by $\delta$ the number of double points of a representative of $a[H]$ in $\mathbb CP^2$, we consider classes $A= a[H]-2\sum_{i=1}^\delta e_i-\sum_{i=\delta +1}^k e_i$.  Clearly, the inequality $a^2-4\delta\ge -4$ must hold, otherwise there are too many nodal points.  Using the adjunction equality for $a[H]$ to determine the number of nodal points, we obtain an upper bound for $a$:  
\[
a^2-4\delta\ge -4\;\;\Rightarrow\;\; a^2-(2a^2-6a+4)\ge -4\;\;\Rightarrow\;\; a\le 6.
\]
The following table summarizes the results:

{\renewcommand{\arraystretch}{1.5}
\[
\begin{array}{|c|c|c|c|}\hline
a&\delta&k&A\\\hline
1&0&5&[H]-\sum_{i=1}^5 e_i\\\hline
2&0&8&2[H]-\sum_{i=1}^8 e_i\\\hline
3&1&10&3[H]-2e_1-\sum_{i=2}^{10} e_i\\\hline
4&3 &11&4[H]-2\sum_{i=1}^3e_i-\sum_{i=4}^{11} e_i \\\hline
5&6 & 11&5[H]-2\sum_{i=1}^6 e_i-\sum_{i=7}^{11} e_i\\\hline
6&10 &10&6[H]-2\sum_{i=1}^{10} e_i\\\hline

\end{array}
\]}

It should be noted, that for $2\le a\le 5$, all examples above contain exceptional spheres $E$ with $E\cdot A= 0$, i.e. they are not relatively minimal.  For example, an exceptional sphere in class $H-e_1-e_2$ will not meet $A$ for $2\le g\le 4$ and an exceptional sphere in class $3H-2e_1-\sum_{i=2}^{8} e_i$ does not meet $A$ for $a=5$. 

The example with $a=1$ is particularly interesting: The first blow-up produces a fiber $H-e_1$, the second produces two exceptional spheres $e_2$ and $H-e_1-e_2$.  Blowing up the latter in further 3 points produces a $-4$-sphere.  This is the method described in Example \ref{easy} for constructing $-4$-spheres, however, in rational or ruled manifolds we have a considerable number of further exceptional spheres, in contrast to non-rational or ruled setting.

We note that the case $a=6$ is relatively minimal and that every exceptional sphere $E$ satisfies $E\cdot A=2$.  This follows from the fact, that in this case $A=-2K_X$.

The examples discussed above provide us with two methods for constructing $-4$-spheres in rational manifolds: Blowing up exceptional spheres  as in Ex. \ref{easy} and blowing up immersed pseudoholomorphic spheres.  These lead to the rational examples described in the table above, those obtained from the method described in Ex. \ref{easy} and the blow-up of a fishtail fiber in $\mathbb CP^2\#9\;\overline{\mathbb CP^2}\cong E(1)$.  

\begin{question}
Are there other classes in $\mathbb CP^2\#k\;\overline{\mathbb CP^2}$ representable by symplectic $-4$-spheres?
\end{question}

Of particular interest are the blow-up of the fishtail fiber, which is the blow up of the anticanonical class $-K$ in $\mathbb CP^2\#9\;\overline{\mathbb CP^2}$ as well as the final class in the table above, which is the blow-up of a double point in $-2K$.  This leads to the following simpler question:

\begin{question}
For which $(n,a)$ can the blow-up of a representative of $-nK_X$ in $X=\mathbb CP^2\#9\;\overline{\mathbb CP^2}$ be represented by a symplectic $-4$-sphere in class $-nK_X-ae$?
\end{question}  

These two questions (for $k\le 10$) will be of particular interest when considering the Kodaira dimension of symplectic sums along spheres, see \cite{D}.

\end{example}

\section{Minimality of Sums along Spheres\label{min}}

If $(X,V_X)$ is not relatively minimal, then clearly any symplectic sum will contain exceptional spheres.  Thus we assume in the following, that all exceptional spheres $E$ which do not meet $V_X$ have been blown down, i.e. $(X,V_X)$ is relatively minimal.  We consider minimality with regard to the list given in Section \ref{spheresum}.  We first consider sums with $S^2$-bundles.  Then we make some general statements about relative invariants in $\mathbb CP^2$ before considering the issue of minimality in sums with $\mathbb CP^2$.

\subsection{Minimality of Sums with $S^2$-bundles\label{mins2}}

This is the simplest cases and makes no use of the advanced machinery from GW-theory.

\subsubsection{$Y$ a $S^2$-bundle over a genus $g$ surface, $V_Y$ a fiber}  As $[V_X]^2=0$, $X$ must also be a $S^2$-bundle over a surface of genus $\tilde g$ and $V_X$ a fiber.  In this case the fiber sum $X\#_VY$ is again a $S^2$-bundle over a surface of genus $g+\tilde g$.  If $g+\tilde g>0$, then the sum is minimal as $X$ and $Y$ are assumed to be relatively minimal.  

If either of $X$ or $Y$ is $S^2\times S^2$, then the fiber sum does not change the diffeomorphism type.  If both $X$ and $Y$ are the nontrivial $S^2$-bundle over $S^2$, then the fiber sum contains a section of square $2$.  This determines the diffeomorphism type as that of $S^2\times S^2$, hence the sum is minimal.

It follows that $M=X\#_VY$ is minimal except possibly if $Y=S^2\times S^2$ and $V_Y$ is a fiber, in which case $M$ is minimal if and only if $X$ is minimal.

\subsubsection{$Y$ a $S^2$-bundle over a sphere, $V_Y$ a section}  In this case, as was shown in \cite{U}, $X\#_VY$ and $X$ are diffeomorphic.  Hence, the sum is minimal if and only if $X$ is minimal.

\subsection{Vanishing of Relative Invariants in $\mathbb CP^2$}

The complex projective space $\mathbb CP^2$ will play a central role in the following discussion of minimality.  We will need to calculate relative invariants with no absolute insertions, we call these purely relative.  In this section we state some vanishing results for purely relative invariants on $\mathbb CP^2$.

\begin{lemma}\label{H}[Lemma 2.7, \cite{M3}]
Consider the symplectic pair $(\mathbb CP^n, \mathbb CP^{n-1})$.  Let $\lambda\in H_2(\mathbb CP^n)$ denote the class of a line.  If $n>1$ and $d>0$ then
\[
\langle\;\vert\beta_1,...,\beta_r\rangle^{\mathbb CP^n, \mathbb CP^{n-1}}_{0,d\lambda,{\bf s}}=0
\]
for all insertions $\beta_i\in H^*(\mathbb CP^{n-1})$.

\end{lemma}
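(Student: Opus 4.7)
The plan is a straightforward dimension count: the maximum possible total degree of the relative insertions $\beta_i$ is strictly less than the virtual dimension of the underlying relative moduli space, which forces the invariant to vanish by the dimensional convention recalled in Section \ref{pre}.

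First, I would compute the real virtual dimension of $\overline{\mathcal M}^{\mathrm{rel}}_{0,d\lambda,\mathbf s}(\mathbb CP^n,\mathbb CP^{n-1})$. The natural higher-dimensional analog of the index formula in Lemma \ref{index}, taken with $g=0$ and no absolute marked points, gives
\[
2\!\left(-K_{\mathbb CP^n}\cdot(d\lambda)+(n-3)+r-\sum_i s_i\right)\;=\;2\!\left((n+1)d+(n-3)+r-d\right)\;=\;2(nd+n-3+r),
\]
using $c_1(\mathbb CP^n)=(n+1)H$ and the compatibility $\sum_i s_i=(d\lambda)\cdot[\mathbb CP^{n-1}]=d$.

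Second, I would bound the total real degree of the constraints. Since each $\beta_i\in H^*(\mathbb CP^{n-1})$ and $\dim_{\mathbb R}\mathbb CP^{n-1}=2(n-1)$, we have $\deg_{\mathbb R}\beta_i\le 2(n-1)$, while the conditions $s_i\ge 1$ and $\sum_i s_i=d$ force $r\le d$. Combining these,
\[
\sum_i\deg_{\mathbb R}\beta_i\;\le\;2(n-1)r\;\le\;2(n-1)d\;<\;2(nd+n-3+r),
\]
the last strict inequality being equivalent to $0<2d+n-3$, which holds for every $n\ge 2$ and $d\ge 1$. Hence the degrees of the insertions cannot saturate the virtual dimension, and by convention the relative Gromov--Witten invariant is zero.

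The only point demanding care is that curves in the expanded moduli space may contain components mapping into $\mathbb CP^{n-1}$, which could a priori enlarge the effective moduli. The derivation preceding Lemma \ref{index} adapts verbatim to complex dimension $n$ and shows that such $m$-level configurations inhabit strata whose virtual dimension drops by at least $m$, so they cannot rescue the count against the estimate above. Beyond this routine bookkeeping I foresee no obstacle.
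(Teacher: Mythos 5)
Your dimension count is correct, but note that the paper does not actually prove this lemma --- it is quoted verbatim as Lemma 2.7 of \cite{M3} --- so what you have written is a self-contained elementary proof where the paper supplies only a citation. The count is the standard one: with $g=0$, no absolute insertions, $c_1(\mathbb CP^n)=(n+1)H$ and $\sum_i s_i=d\lambda\cdot[\mathbb CP^{n-1}]=d$, the virtual real dimension is $2(nd+n-3+r)$, while the relative insertions contribute at most $2(n-1)r$; since each $s_i\ge 1$ gives $1\le r\le d$ and $n\ge 2$, saturation would force $nd+n-3\le (n-2)r\le (n-2)d$, i.e. $2d+n-3\le 0$, impossible for $d\ge 1$, so the invariant vanishes by the convention recalled in Section \ref{pre}. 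Two small points of precision: your parenthetical remark that the last inequality in your chain is equivalent to $0<2d+n-3$ is slightly off --- $2(n-1)d<2(nd+n-3+r)$ simplifies to $0<d+n-3+r$, which in the edge case $n=2$, $d=1$ requires $r\ge 1$ (true, because $\sum_i s_i=d>0$ forces at least one relative contact point); the rearrangement given above is the cleaner way to arrive at $2d+n-3$. Second, the closing paragraph about multi-level configurations is not needed for this argument: the relative invariant is defined by pairing insertions against the virtual class of the compactified relative moduli space, whose virtual dimension is the number computed, so a degree mismatch already forces vanishing without any stratum-by-stratum estimate (although the analogue of Lemma \ref{index} does adapt as you indicate, and that viewpoint is the one genuinely needed later in the paper, where dimensions do match and degenerate contributions must be excluded by the $-m$ drop).
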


For a purely relative invariant of a symplectic 4-manifold $(X,V_X,\omega)$ to be non trivial, a necessary condition is
\[
2\left(-K_X\cdot A+g-1+r-A\cdot [V_X]\right)=\sum \mbox{ deg }\beta_i\le 2r
\]
which can be rewritten to obtain the condition
\[
(K_X\;+\;[V_X])\cdot A\;\ge\;g-1.
\]

\begin{lemma}\label{2H}
Let $(X,V_X)$ be a symplectic pair and $A\in H_2(X)$.  Assume $\dim_{\mathbb C}X=2$.  If 
\[
(K_X\;+\;[V_X])\cdot A\;<\;g-1,
\]
then all purely relative invariants $\langle\;\vert\beta_1,...,\beta_r\rangle^{X,V_X}_{g,A,{\bf s}}$ vanish.
\end{lemma}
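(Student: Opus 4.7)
The plan is to derive the lemma directly from the dimension constraint spelled out in the paragraph immediately preceding its statement. By convention, a purely relative Gromov--Witten invariant of $(X,V_X)$ vanishes unless the total degree of the inserted cohomology classes equals twice the virtual dimension of the relevant moduli space of relative stable maps. With no absolute insertions and $\sum s_i = A\cdot[V_X]$, this constraint reads
\[
\sum \deg \beta_i \;=\; 2\bigl(-K_X\cdot A + g - 1 + r - A\cdot [V_X]\bigr).
\]

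The key input I will use is the hypothesis $\dim_{\mathbb C} X = 2$, which forces $V_X$ to be a real surface. Hence each $\beta_i \in H^*(V_X)$ has $\deg \beta_i \le 2$, and therefore $\sum \deg \beta_i \le 2r$. Substituting this bound into the displayed identity and dividing by $2$ produces
\[
(K_X + [V_X])\cdot A \;\ge\; g - 1
\]
as a necessary condition for the invariant to be nonzero. The contrapositive is precisely the statement of the lemma.

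There is no serious obstacle here; the argument is pure bookkeeping on the virtual fundamental cycle of the relative moduli space. The one point worth emphasizing is that the bound $\deg \beta_i \le 2$ uses only that $\dim_{\mathbb R} V_X = 2$, which is exactly where the hypothesis on the complex dimension of $X$ enters the proof. No assumption on the genus of $V_X$ or on the orders $\mathbf{s}$ beyond the homological constraint $\sum s_i = A\cdot[V_X]$ is required.
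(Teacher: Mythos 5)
Your argument is correct and is exactly the paper's own reasoning: the paper derives the lemma from the same dimension constraint $\sum \deg \beta_i = 2\left(-K_X\cdot A + g - 1 + r - A\cdot[V_X]\right) \le 2r$ (each $\beta_i \in H^*(V)$ having degree at most $2$ since $V$ is a real surface), rearranged to $(K_X+[V_X])\cdot A \ge g-1$ and then taken in the contrapositive. No differences worth noting.
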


A simple but useful corollary is

\begin{cor}\label{2happly}Let $(X,V_X)=(\mathbb CP^2, bH)$ and consider the class $A=a[H]$ for $a>0$.  
\begin{enumerate}
\item If $b=1$, all purely relative invariants in class $A$ with $g=0$ vanish.
\item If $b=2$, all purely relative invariants in class $A$ with $g=0$ vanish for $a\ge 2$.
\item If $b\le 2$, then all purely relative invariants of genus $g>0$ vanish.
\end{enumerate}

\end{cor}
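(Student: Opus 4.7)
The plan is to derive the corollary as a direct application of Lemma \ref{2H}, after computing the quantity $(K_X + [V_X]) \cdot A$ explicitly in $\mathbb CP^2$. Using $K_{\mathbb CP^2} = -3H$ and $H \cdot H = 1$, this is simply
\begin{equation*}
(K_X + [V_X]) \cdot A \;=\; (-3H + bH)\cdot aH \;=\; (b-3)\,a,
\end{equation*}
so in each of the three cases it remains only to verify the inequality $(b-3)a < g-1$ and then invoke Lemma \ref{2H}.

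For (1), with $b=1$ and $g=0$, the inequality reads $-2a < -1$, which holds for every integer $a \geq 1$. (Alternatively, this case is the direct specialization of Lemma \ref{H} to $n=2$, so it may be cited from there without recomputation.) For (2), with $b=2$ and $g=0$, the inequality becomes $-a < -1$, i.e.\ $a \geq 2$, which is exactly the stated hypothesis. For (3), with $b \leq 2$ and $g \geq 1$, the factor $b-3$ is at most $-1$, so using $a \geq 1$ we get $(b-3)a \leq -a \leq -1 < 0 \leq g-1$, and the inequality holds automatically for all admissible $a$.

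There is no genuine obstacle here: all the substantive content is in Lemma \ref{2H}, whose proof in turn is just the dimension count $\sum\deg\beta_i \leq 2r$ combined with the index formula for purely relative invariants. The only mild point to watch is that in part (3) one exploits $a\ge 1$ (which follows from $a>0$ and integrality of homology classes) to promote the inequality $b-3\le -1$ to $(b-3)a\le -1$; beyond that, each statement is a single substitution into the formula for $(K_X + [V_X]) \cdot A$ on $\mathbb CP^2$.
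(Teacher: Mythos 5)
Your proposal is correct and follows essentially the same route as the paper: compute $(K_X+[V_X])\cdot A=(b-3)a$ and feed it into the dimension-count Lemma \ref{2H}, with part (2) and part (3) argued exactly as in the text. The only cosmetic difference is part (1), where the paper cites Lemma \ref{H} while you observe that the same Lemma \ref{2H} inequality $-2a<-1$ already suffices (and you note the Lemma \ref{H} alternative anyway), so there is no substantive divergence.
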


\begin{proof}
The first statement is an application of Lemma \ref{H}.  The second follows from Lemma \ref{2H}:
\[
(K_X\;+\;[V_X])\cdot A\;<\;g-1\;\;\Rightarrow\;\;-[H]\cdot a[H]=-a< -1.
\]
If we consider $g>0$, then for $b\le 2$ we must have
\[
(-3+b)a<g-1,
\]
which is surely satisfied, proving the third statement.
\end{proof}

%These results all assume that the curve in class $A$ is not constrained in any way other than through the data given in the invariant, i.e. all points on $V$ are independent, we have no multiple covers, etc. Any further constraint will reduce the dimension count, and, because in all of the above examples the dimensions of the moduli spaces are too big and thus the invariant vanishes, could lead to contributions in the sum formula.  

\subsection{Symplectic Sums for Embedded Curves}

In our setup, we wish to determine the GW-invariant associated to an embedded curve.  The fact that the curve in the symplectic sum is embedded will allow us to eliminate certain configurations from consideration.  

Note first, that the symplectic sum occurs in an arbitrarily small neighborhood of the hypersurface $V$.  Moreover, the sets of almost complex structures making $V_X$ resp. $V_Y$ pseudoholomorphic are only constrained on the hypersurfaces.  At any point away from $V$, the almost complex structures can be perturbed locally without constraint.  

Consider therefore a pair of classes $(A_X,A_Y)$, contributing to the sum formula, such that for generic $(J_X,J_Y)$ the moduli spaces associated to one (or both) of the classes contains a curve $(T,\phi)$ such that $\phi(T)$ is not embedded at a point away from $V$ but which is counted in the sum formula for some choice of admissible insertions.  Then there exist almost complex structures $J$ on $M=X\#_VY$, contained in the generic set obtained for the class $A$ ($\leftrightarrow (A_x,A_Y)$), for which  the absolute invariant counting embeddings $(\Sigma,\psi)$ of class $A$ in $M$ would have a contribution from a non-embedded curve $\psi(\Sigma)$.  

Thus we need only consider pairs $(A_X,A_Y)$ in the sum, such that any non-embedded behavior occurs on the hypersurface $V$.  This rules out multiple covers as contributors in the sum formula.

%Such non-embedded points could appear as components mapping into $V$ (possibly even not embedded in $V$) or as contact points with $V$ which have multiple orders, say a double point or an overlapping pair of contact points.  However, such behavior in $V$ leads to a reduction in the index of the curves, e.g. see Lemma \ref{index}. 
%
%We shall assume that all curves are embedded when applying the sum formula in the following.

\subsection{Minimality of Sums with $\mathbb CP^2$}

We now show, that under blow-down we obtain no new exceptional spheres while for the rational blow-down it is possible to obtain an exceptional sphere via gluing.

Consider a pseudoholomorphic map $(T,\phi)$ into $\mathbb CP^2$.  Let $C_1$ and $C_2$ correspond to the image under $\phi$ of two disjoint connected components of $T$.  Then $C_1$ and $C_2$ intersect in $\mathbb CP^2$, so that even if $\phi$ restricted to each component is an embedding, the image $\phi(T_1\cup T_2)$ is not an embedded curve.  If such a curve is to contribute to the absolute invariant for an embedded curve in the sum formula, then we shall need to ensure that $C_1\cap C_2\subset V_{\mathbb CP^2}$.  However, in the GW-invariants, we have no tools available to ensure this, as homological insertions will not suffice.  We may thus assume, that in the following arguments involving an embedded connected exceptional sphere, the maps on the $\mathbb CP^2$ side are from connected Riemann surfaces $\Sigma$ and $\phi$ is an embedding.

\subsubsection{$(Y,V_Y)=(\mathbb CP^2,H)$}   In this case $V_X\subset X$ represents an exceptional sphere. 

Assume first that $X$ is not rational or ruled.  Then no two exceptional spheres intersect, see Lemma \ref{inters}, hence, having assumed $(X,V_X)$ is relatively minimal, in this case $X= X_m\#\overline{\mathbb CP^2}$ for some minimal manifold $ X_m$.  The fiber sum $M=X\#_VY$ is just the blow down of $X$, i.e. $M=X_m$.  Hence the result is always minimal for any relatively minimal $(X,V_X)$.  

If $X$ is rational or ruled, we can no longer assume that $V_X$ is the only exceptional sphere in $X$.  However, assuming relative minimality implies that all other exceptional spheres must satisfy $E\cdot [V_X]\ne 0$, more precisely Lemma 3.5, \cite{LL2} shows that $E\cdot [V_X]> 0$.

We will use the sum formula to show that $M$ is a minimal manifold.  Assume that $M$ is not minimal and let $E$ denote an exceptional class in $M$.  The sum formula for this configuration is as follows:
\[
1=\langle\;\rangle^{M}_{0,E}=\sum \langle\;\vert\beta_1,...,\beta_r\rangle^{X,V_X}_{T_X,{\bf s}}\langle\;\vert \hat\beta_1,...,\hat\beta_r\rangle^{\mathbb CP^2,H}_{T_{\mathbb CP^2},{\bf s}}
\]
where we have no absolute insertions, see Ex. \ref{-1}.  

As discussed above, we may assume $T_{\mathbb CP^2}$ has one connected component.  Consider the invariant $ \langle\;\vert \hat\beta_1,...,\hat\beta_r\rangle^{\mathbb CP^2,H}_{0,B,{\bf s}}$ on $\mathbb CP^2$.  All classes $B=a[H]$ must satisfy $a>0$ to be effective.  For classes with $a>0$, Lemma \ref{H} shows that the purely relative invariant vanishes.

Thus we see that no configuration exists such that the sum formula provides a non-zero answer, hence $M$ must be minimal.

\subsubsection{$(Y,V_Y)=(\mathbb CP^2,2H)$}

The rational blow-down of a symplectic $-4$-sphere can produce a non-minimal manifold $M$.  This section provides three examples of this behavior.  Assuming $(X,V_X)$ is relatively minimal, the appearance of an exceptional sphere in the fiber sum is rather more subtle than in the cases of $S^2$-bundles, i.e. it is not just dependent on the minimality or non-minimality of $X$. 

Cor. 1.7, \cite{G} states, that embedded symplectic surfaces in $X$ and $Y$ intersecting $V$ transversely and positively can be glued to obtain a symplectic surface in $M$.  This is of course at the core of the GW-sum formula, but in the following examples we make direct use of this result.  

We begin with an example which makes no use of an exceptional sphere in the initial manifold $X$ to produce an exceptional sphere in the sum.

\begin{example}\label{-20}
Let $X=\mathbb CP^2\#10\;\overline{\mathbb CP^2}$ and $[V_X]=3[H]-2e_1-\sum_{i=2}^{10} e_i$.  Then the classes $2[H]-\sum_{i=1}^4e_i$ and $e_1-e_2$ are representable by symplectic embedded spheres of self-intersection $0$ and $-2$ respectively each intersecting a hypersurface $V_X$ in a single transverse positive point.  The rational blow-down of $V_X$ will produce a symplectic exceptional sphere from these two spheres when glued to a line $H$ in $\mathbb CP^2$.  However, note that $(e_1-e_2)\cdot e_1=-1$, thus there exists no $J$ making both spheres $J$-holomorphic at the same time.  Moreover, the initial configuration ensuring that such curves exist is highly non-generic:  The sphere representing $2[H]$ must intersect the immersed sphere representing $3[H]$ in the double point and in such a manner, that after blowing up this double point the manifolds representing $e_1$, $3[H]-2e_1$ and $2[H]-e_1$ all intersect in a single point.  It is this point which is then blown-up to obtain the $-2$-sphere representing $e_1-e_2$.

%\begin{center}
\begin{figure}[H]
\centering
\begin{tikzpicture}
\draw (-2,4) node{$X$};
\draw (3,4) node{$\mathbb{C}P^2$};
\draw (0,0)--(0,4) node[anchor=east]{$V_X$};
\draw (0,1)--(-2,1) node[anchor=north]{$C_2$};
\draw (0,3)--(-2,3) node[anchor=north]{$C_1$};
\draw (1,0)--(1,4) node[anchor=west]{$V_{\mathbb CP^2}$};
\draw[dotted] (0,3)--(1,3);
\draw[dotted] (0,1)--(1,1);
\draw (1,1) arc(-90:90:1cm);
\draw (2,2) node[anchor=south west]{$H$};
\end{tikzpicture}
\caption{\label{pic1}This configuration of curves leads to non-minimal $M=X\#_VY$ in Ex. \ref{-20} and \ref{nonminimal}.  }
\end{figure}
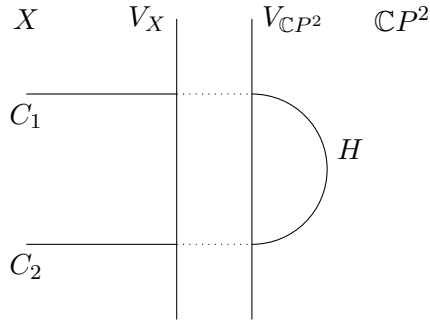
%\end{center}

\end{example}

Note that the configuration does have at least two exceptional spheres meeting $V_X$ as described in Thm. \ref{minimal}.  In the next section it will be seen that such a configuration always exists in $X$ relative to $V_X$ if the manifold $M$ is non-minimal (and $(X,V_X)$ is relatively minimal), i.e. the configuration described in Thm. \ref{minimal} is a criterion for the non-minimality of the sum. 

We now consider examples with exceptional spheres contributing to the construction in the sum.  We shall see two different configurations which lead to non-minimal sums.  The first is the configuration mentioned in Thm. \ref{minimal}.

\begin{example}\label{nonminimal}
The only classes with embedded spheres as representatives in $\mathbb CP^2$ are $A_Y\in\{[H],2[H]\}$.  If $A_Y=[H]$, then, applying Lemma \ref{formulas}, we obtain that $A_X^2=-2$ and $A_X\cdot[V_X]=2$.  Letting the class $A_X$ be represented by two disjoint spheres $A_1$ and $A_2$ such that $(A_1+A_2)^2=A_1^2+A_2^2=-2$ we obtain $K_X\cdot A_X=-4-(A_1^2+A_2^2)=-2$, and thus $K_{X\#_VY}\cdot A=-1$.  

Such a configuration of curves can be found in Ex. \ref{K3}.  In the manifold $X$, each exceptional sphere intersects $V_X$ positively and transversally in a single point of order 1.  Hence, the fiber sum $M=X\#_{V_X=2H}\mathbb CP^2$ contains a single exceptional sphere generated by the two exceptional spheres and $H\subset \mathbb CP^2$, see Fig. \ref{pic1}.    
\end{example}

\begin{example}
Consider a configuration of exceptional spheres in $V$ such that $e_1\cdot[V_X]=1$ and $e_2\cdot [V]=2$.  (Such a configuration is hidden in Ex. \ref{-20}.)  Then the rational blow-down of $V_X$ will be non-minimal.  This can be seen as follows:  Lemma 5.1, \cite{G} allows us to blow down $e_1$ in $X$ while blowing up a point in $2H\subset \mathbb CP^2$ without changing the diffeomorphism type of the fiber sum $M$, i.e. $M=X\#_{V_X}\mathbb CP^2$ is diffeomorphic to $(X\#_{e_1}\mathbb CP^2)\#_{\tilde V}(\mathbb CP^2\#\overline{\mathbb CP^2})$ where $\tilde V$ is the blow-up of $2H$ in a point.  The latter is the sum of a non-minimal manifold $X\#_{e_1}\mathbb CP^2$ with an $S^2$-bundle along a section.  Thus, by the results in Section \ref{mins2}, $M$ is non-minimal.  The precise appearance of an exceptional sphere can be seen in the following diagrams:

%\begin{center}
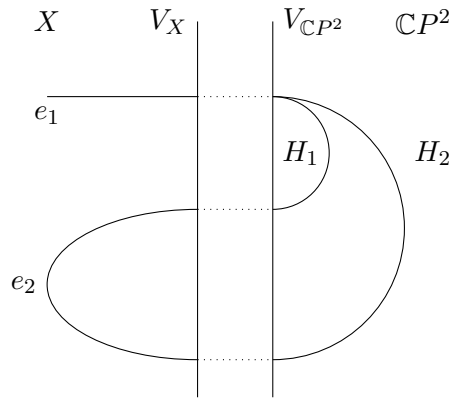
\begin{figure}[H]
\centering
\begin{tikzpicture}
\draw (-2,5) node{$X$};
\draw (3,5) node{$\mathbb{C}P^2$};
\draw (0,0)--(0,5) node[anchor=east]{$V_X$};

\draw (0,2.5) arc(90:270:2cm and 1cm);
\draw (-2,1.5) node[anchor=east]{$e_2$};
\draw (0,4)--(-2,4) node[anchor=north]{$e_1$};

\draw (1,0)--(1,5) node[anchor=west]{$V_{\mathbb CP^2}$};

\draw[dotted] (0,2.5)--(1,2.5);
\draw[dotted] (0,0.5)--(1,0.5);
\draw[dotted] (0,4)--(1,4);

\draw (1,2.5) arc(-90:90:0.75cm);

\draw (1.75,3.25) node[anchor=east]{$H_1$};
\draw (1,0.5) arc(-90:90:1.75cm);

\draw (2.75,3.25) node[anchor=west]{$H_2$};
\end{tikzpicture}
\caption{ The rational blow-down $M=X\#_{V_X}\mathbb CP^2$ of the $-4$-sphere $V_X$. }
\end{figure}
%\end{center}

%\begin{center}
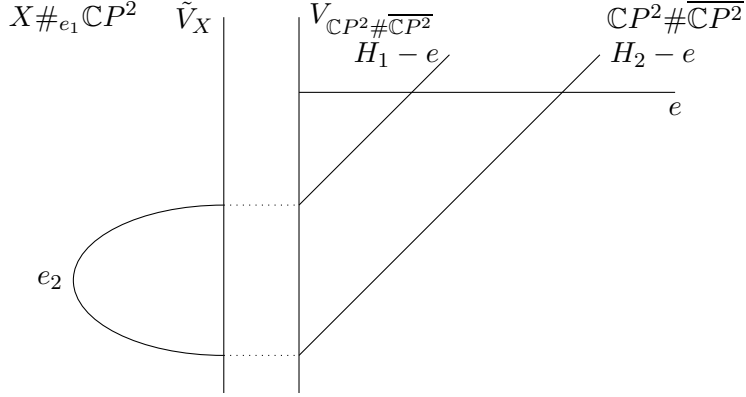
\begin{figure}[H]
\centering
\begin{tikzpicture}
\draw (-2,5) node{$X\#_{e_1}\mathbb CP^2$};
\draw (6,5) node{$\mathbb{C}P^2\#\overline{\mathbb CP^2}$};
\draw (0,0)--(0,5) node[anchor=east]{$\tilde V_X$};

\draw (0,2.5) arc(90:270:2cm and 1cm);
\draw (-2,1.5) node[anchor=east]{$e_2$};
\draw (1,4)--(6,4) node[anchor=north]{$e$};

\draw (1,0)--(1,5) node[anchor=west]{$\tilde V_{\mathbb CP^2\#\overline{\mathbb CP^2}}$};

\draw[dotted] (0,2.5)--(1,2.5);
\draw[dotted] (0,0.5)--(1,0.5);
%\draw[dotted] (0,4)--(1,4);

%\draw (1,2.5) arc(-90:90:0.75cm);

\draw (1,2.5)-- (3,4.5) node[anchor=east]{$H_1-e$};
%\draw (1,0.5) arc(-90:90:1.75cm);

\draw (1,0.5)--(5,4.5) node[anchor=west]{$H_2-e$};
\end{tikzpicture}
\caption{  $(X\#_{e_1}\mathbb CP^2)\#_{\tilde V}(\mathbb CP^2\#\overline{\mathbb CP^2})$}
\end{figure}
%\end{center}

\end{example}

\subsection{Rational Blow-down}  The main purpose of this section is to provide a criterion which will allow us to determine whether the rational blow-down will be minimal or not.  This again makes use of the sum formula for Gromov-Witten invariants developed in \cite{LR}.  We show that the configuration in Ex. \ref{nonminimal} is the only configuration leading to an exceptional sphere in the sum for a generic choice of almost complex structure $J$ among those making the $-4$-sphere $J$-holomorphic.  

The sum formula is as follows:
\[
1=\langle\;\rangle^{M}_{0,E}=\sum \langle\;\vert\beta_1,...,\beta_r\rangle^{X,V_X}_{T_X,{\bf s}}\langle\;\vert \hat\beta_1,...,\hat\beta_r\rangle^{\mathbb CP^2,2H}_{T_{\mathbb CP^2},{\bf s}}
\]
where $\hat\beta_i$ is dual to $\beta_i$ in $H^*(V)$.  We may again restrict ourselves to only connected embedded curves in the $\mathbb CP^2$ invariant as in the case $(Y,V_Y)=(\mathbb CP^2,H)$.

 Cor \ref{2happly} shows, that only if $B=[H]$ does the invariant $ \langle\;\vert \hat\beta_1,...,\hat\beta_r\rangle^{\mathbb CP^2,2H}_{0,B,{\bf s}}$ possibly contribute non-trivially.  Moreover, in this case we actually have
\[
\sum_{i=1}^r\mbox{ deg }\beta_i\;=\;2\left(-(-3[H])\cdot [H]-1+r-2[H]\cdot [H]\right)\;=\;2r.
\]
Thus the relative insertions in the $\mathbb CP^2$ invariant for the class $B=[H]$ must be one of the following:
\begin{enumerate}
\item[(a)] a single point insertion (deg $\beta_1=2$) with contact order 2 or
\item[(b)] two point insertions (deg $\beta_i=2$), at each point with order 1.
\end{enumerate}
Moreover, the restrictions on the $\mathbb CP^2$ side lead to the following conditions on the class $A_X$ (see Lemma \ref{formulas}):
\begin{equation}\label{xdata}
A_X\cdot [V_X]=2,\;\;\;A_X^2=-2\;\;\mbox{   and   }\;\;K_X\cdot A_X=-2.
\end{equation}

{\bf One contact point of order $\bf(2)$:}  Consider any embedded curve $C_X=\phi(T_X)$ representing $A_X$.  The curve $C_X$ must be connected, as we have only one contact 
 point with $V_X$ and the exceptional sphere in $M$ is connected.  Moreover, $C_X$ cannot be the image of a simple $J$-holomorphic map, as otherwise the adjunction formula would imply
 \[
 K_X\cdot A_X\ge 0
 \]
 contradicting Eq. \ref{xdata}.

Thus no pseudoholomorphic maps meeting $V_X$ in a single point of contact order $(2)$ contribute in the sum formula.

{\bf Two distinct contact points:}  Denote by $A_*=[C_*]\in H_2(X)$ the classes of the two disjoint curves $C_1$ and $C_2$.  These classes must satisfy
\[
A_1^2+A_2^2=-2\mbox{ and }A_1\cdot[V_X]=1=A_2\cdot[V_X]
\] 
while $A_1\cdot A_2=0$.  Both  curves must have genus 0.   

The two curves are chosen disjoint and we need to calculate the invariant $\langle\;\vert\beta_1,...,\beta_r\rangle^{X,V_X}_{T_X,(1,1)}$ where the graph $T_X$ consists of two distinct nodes each with a tail and no edges.  Due to the definition of this invariant, we consider the existence of each curve separately.

The dimension of the strata of curves representing $A_1$ is
\[
\dim_{\mathbb C}=A_1^2+1.
\] 
For this to have non-negative dimension $A_1^2\ge -1$. The same dimension statement holds for $A_2$ as well.  Hence we obtain $A_1^2=A_2^2=-1$.  This corresponds to two disjoint exceptional spheres $C_1$ and $C_2$.

This exhausts all possible configurations and allows us to state the following cirterion:

\begin{lemma}Consider the rational blow-down $M$ of $(X,V_X)$.  $M$ is not minimal if:
\begin{itemize}
\item $(X,V_X)$ is not relatively minimal or 
\item $X$ contains  2 disjoint distinct exceptional spheres $E_i$ each meeting $V_X$ transversely and positively in a single point with $[E_i]\cdot [V_X]=1$.
\end{itemize}

\end{lemma}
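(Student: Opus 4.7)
The plan is to prove sufficiency directly by exhibiting, in each of the two cases, an embedded symplectic sphere of self-intersection $-1$ in $M$.

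For the first bullet, suppose $(X,V_X)$ is not relatively minimal, so there is an exceptional class $E\in\mathcal E_X$ with $E\cdot [V_X]=0$. Choose an $\omega_X$-tame almost complex structure $J$ making $V_X$ pseudoholomorphic, and represent $E$ by an embedded $J$-holomorphic sphere $C_E$. Positivity of intersections combined with $[C_E]\cdot [V_X]=0$ forces $C_E$ to be disjoint from $V_X$. Since the symplectic sum alters only a tubular neighborhood of $V_X$, the sphere $C_E$ persists in $M$ as an embedded symplectic $-1$-sphere, and $M$ is not minimal.

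For the second bullet, suppose $X$ contains disjoint exceptional spheres $E_1,E_2$ meeting $V_X$ transversely in single points $p_1,p_2$ with $[E_i]\cdot [V_X]=1$. Via the identification $V_X\cong V_{\mathbb CP^2}$ used in the fiber sum, regard $p_1,p_2$ as distinct points on the conic $V_{\mathbb CP^2}=2H$. The unique projective line $\ell\subset\mathbb CP^2$ through $p_1,p_2$ meets $V_{\mathbb CP^2}$ transversely and positively in exactly these two points (since $[\ell]\cdot [2H]=2$). By Cor.~1.7 of \cite{G}, after an ambient isotopy matching the normal-bundle data of the $E_i$ at $p_i$ with that of $\ell$ at $p_i$ under the gluing isomorphism $\Theta$, the configuration $E_1\sqcup E_2$ glues to $\ell$ across $V$ to yield an embedded connected symplectic surface $S\subset M$. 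Writing $A=[S]$, $A_X=[E_1]+[E_2]$ and $A_Y=[\ell]=[H]$, Lemma \ref{formulas} gives
\[
A^2=A_X^2+A_Y^2=(-1-1+0)+1=-1
\]
and, using $K_X\cdot E_i=-1$ by adjunction together with $[E_i]\cdot [V_X]=1$,
\[
K_M\cdot A=(K_X+[V_X])\cdot(E_1+E_2)+(K_{\mathbb CP^2}+2[H])\cdot [\ell]=0+0+(-3+2)=-1.
\]
Applying the adjunction formula in $M$ yields $2g-2=A^2+K_M\cdot A=-2$, so $S$ is a sphere, hence an exceptional sphere in $M$.

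The only delicate point is the simultaneous matching of contact data in the gluing step of the second case; this is precisely what Cor.~1.7 of \cite{G} provides, since the required fiber-orientation-reversing identification of normal bundles can be arranged to carry the local normal directions of the $E_i$ at $p_i\in V_X$ onto those of $\ell$ at $p_i\in V_{\mathbb CP^2}$. No deeper obstacle arises, and the two constructions together prove the lemma.
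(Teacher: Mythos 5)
Your proposal is correct, and for the direction the lemma actually asserts (sufficiency) it is essentially the argument the paper itself uses: the second bullet is exactly the construction of Example \ref{nonminimal} and Figure \ref{pic1} --- glue the two exceptional spheres to the line through the two matching points on the conic via Cor.~1.7 of \cite{G}, and read off $A^2=-1$, $K_M\cdot A=-1$ from Lemma \ref{formulas} --- while the first bullet is the remark opening Section \ref{min} that a non-relatively-minimal pair yields exceptional spheres surviving into any sum. The one difference worth noting is that the bulk of the paper's text preceding the lemma (the sum-formula analysis ruling out the order-$2$ contact case and forcing $A_1^2=A_2^2=-1$ in the two-point case) is really aimed at the converse statement, namely that for a relatively minimal pair this configuration is the \emph{only} source of exceptional spheres in the rational blow-down; your proof does not touch that, but the lemma as phrased does not require it. Two small streamlinings: for the first bullet you can avoid the discussion of generic $J$ among those preserving $V_X$ (where a priori a limit representative could have components in $V_X$) by simply invoking the quoted blow-down lemma of McDuff (Thm.~1.1(ii) of \cite{M}), which directly provides an exceptional sphere geometrically disjoint from $V_X$ whenever $(X,V_X)$ is not relatively minimal; and in the second bullet, instead of adjunction in $M$ one can also see $g=0$ directly, since gluing the two disjoint spheres $E_1,E_2$ to the connected sphere $\ell$ at one point each adds no genus. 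Neither point is a gap in your argument.
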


\begin{remark} This result is no longer true in the smooth category.  As was shown in Ex. 2 of Section 3, \cite{FS}, a K3-surface contains a smooth embedded $-4$-sphere which, when it is blown down, produces a manifold diffeomorphic to $3\mathbb CP^2\#18\;\overline{\mathbb CP^2}$.

\end{remark}

\end{document}